\newtheorem{thm}{Theorem}[section]
\newtheorem{example}[thm]{Example}
\newtheorem{corollary}[thm]{Corollary}
\newcommand{\N}{\ensuremath{\mathbb{N}}}
\DeclareMathOperator{\diag}{diag}
\DeclareMathOperator{\id}{id}
\DeclareMathOperator{\var}{var}
\newcommand{\bx}{\bm{x}}
\newcommand{\tT}{\mathrm{T}}
\begin{document}

\title{On the Dynamical System \\
  of Principal Curves in $\mathbb R^d$}
\author{
  Robert Beinert\footnote{Institute of Mathematics,
    Technische Universität Berlin,
    Stra{\ss}e des 17.~Juni~136, 
    10623 Berlin, Germany,
    \{beinert,berdellima,graef,steidl\}@math.tu-berlin.de} 
  \and
  Arian B\"erd\"ellima\footnotemark[1] 
  \and
  Manuel Gr\"af\footnotemark[1]
  \and
  Gabriele Steidl\footnotemark[1]
}
\date{\today}
\maketitle

\begin{abstract}
  Principal curves are natural generalizations of principal lines
  arising as first principal components in the Principal Component
  Analysis.  They can be characterized---from a stochastic point of
  view---as so-called self-consistent curves based on the conditional
  expectation and---from the variational-calculus point of view---as
  saddle points of the expected difference of a random variable and
  its projection onto some curve, where the current curve acts as
  argument of the energy functional.  Beyond that, Duchamp and
  St\"utzle (1993,1996) showed that planar curves can by computed as
  solutions of a system of ordinary differential equations.  The aim
  of this paper is to generalize this characterization of principal
  curves to $\mathbb R^d$ with $d \ge 3$.  Having derived such a
  dynamical system, we provide several examples for principal curves
  related to uniform distribution on certain domains in $\mathbb R^3$.
\end{abstract}



\section{Introduction}\label{sec:intro}
Principal component analysis (PCA) \cite{P1901} is still the working
horse of dimensionality reduction algorithms.  The dimensionality
reduction of data contained in $\mathbb R^d$ is here realized by
projecting them onto the low-dimensional affine subspace that
minimizes the sum of the squared Euclidean distances between the data
points and their orthogonal projections.  If the affine subspace is
one-dimensional, PCA just finds a principal line.  Considering the
data as realization of a random variable
$\bm X:\Omega \to \mathbb R^d$, we may compute the principal line as
minimizer of
\begin{equation}
  \mathbb E \left[\|\bm X - \pi_g (\bm X)\|^2 \right]
\end{equation}  
over all lines $g$ in $\mathbb R^d$, where $\pi_g$ denotes the
orthogonal projection onto $g$.  Throughout this paper $\| \cdot\|$
denotes the Euclidean norm.  There are many attempts to generalize
principal lines in the literature.  One direction is to replace the
linear space $\mathbb R^d$ by a nonlinear space.  For instance, if
$\mathbb R^d$ is substituted by a Riemannian manifold, we may ask for the
geodesic that minimizes
\begin{equation}
  \mathbb E \left[\mathrm{d} (\bm X , \pi_g( \bm X))^2 \right]
\end{equation}
over all geodesics $g$, where $\mathrm{d}(\cdot,\cdot)$ denotes the
distance on the Riemannian manifold.  Among the large amount of
literature about PCA on manifolds, we refer to
\cite{fletcher2013,HHM2010,P2015,SLN2013} and the references therein.

Another generalization keeps the linear space setting but asks for a
smooth curve $\Gamma: [0,\ell] \rightarrow \mathbb R^d$ that is a
critical point of
\begin{equation} \label{g3}
  \mathbb E \left[\|\bm X - \pi_\Gamma (\bm X)\|^2 \right].
\end{equation}
These curves---called principal curves by Hastie \cite{Has84} and
Hastie \& St\"utzle \cite{HS89}---possess the so-called
self-consistency property, which can be explained via conditional
expectations.  For principal curves in the plane, Duchamp \& St\"utzle
\cite{DS1996} prove that these are indeed saddle points of \eqref{g3}.
This is quite contrary to the behaviour of principal lines, which are
local minima.  Moreover, in the companion paper \cite{DS93}, Duchamp
\& St\"utzle show that planar principal curves are solutions of a
system of ordinary differential equation.  By solving this dynamical
system, Duchamp \& St\"utzle find principal curves for uniform
densities on rectangles and annuli.

From a numerical point of view, there are several papers on efficient
computations of principal-like curves for point clouds, which can seen
as finitely many samples with respect to the random variable $\bm X$.
Usually, these proposed algorithms require additional constraints on
the curve \cite{CYYLP2016,Kegl1999,KKLZ2000}.  On the basis of these
algorithms, principal curves have found applications in image
processing like the ice floe identification in satellite images
in \cite{BR1992} or like the feature extraction and classification in
\cite{CG1998a}, speech recognition \cite{RN1998}, and engineering
problems \cite{DM1996}.  A more recent generalization of principal
curves to manifolds was considered in \cite{Hauberg2016}, and principal
curves on spheres were discussed in \cite{KLO2020}.

The aim of this paper is to generalize the characterization of
principal curves by differential equations to $\mathbb R^d$,
$d \ge 3$.  Based on our findings, we will compute principal curves
for uniform distributions on specific domains.  When finishing this
paper, we realized that an ingredient of our computation---namely the
generalization of the so-called transverse moments to
$\mathbb R^d$---has been mentioned in \cite{Del01}, however, without
relating the generalized moments to differential equations.  Finally,
we like to mention an other, completely different, powerful method to
approximate arbitrary measures by measures supported on curves based
on the minimization of the Wasserstein distance
\cite{GKL2019,LGKW2018} or the discrepancy \cite{EGNSS2021} between
such measures.

This paper is organized as follows. In Section~\ref{sec:prelim}, we
provide necessary preliminaries on probability theory. Then, in
Section~\ref{sec:pc}, we recall the definition of principle curves
from the stochastic as well as from the variational point of view.
The characterization of principal curves by a system of differential
equations is derived in Section~\ref{sec:ds}.  We apply our findings
for computing principal curves with respect to uniform distribution on
several domains in $\mathbb R^3$ in Section~\ref{sec:num}.  Finally,
we draw conclusions in Section~\ref{sec:conc}.

\section{Preliminaries in Probability Theory} \label{sec:prelim}
%
In the following, we introduce the necessary notation from probability
theory \cite{Kle20}.  Let $(\Omega, \mathcal A, P)$ be a
\emph{probability space}.  By $\mathcal B(\mathbb R^d)$ we denote the
Borel-$\sigma$-algebra on $\mathbb R^d$.  For a random variable
$\bm X = (X_1,\ldots,X_d) : \Omega \to \mathbb R^d$, the
\emph{push-forward measure} $P_{\bm X} : \mathbb R^d \to [0,1]$ of $P$
by $\bm X$ given by
\[
  P_{\bm X}(A) := P( \bm X^{-1}(A)), \qquad A \in \mathcal B(\mathbb R^d),  
\] 
is called the \emph{distribution} of $\bm X$.  We write
$\bm X \sim P_{\bm X}$.  A random variable
$\bm X:\Omega \to \mathbb R^d$ on a probability space
$(\Omega, \mathcal A, P)$, is called \emph{integrable} if
$\bm X \in L_1(\Omega, P)$, i.e.\
$\int_\Omega |\bm X(\omega)| \mathrm d P(\omega) < \infty$.  If
$\bm X$ is integrable, then the \emph{expectation} of $\bm X$ is
defined by
\[
  \mathbb E[\bm X] := (\mathbb E[X_1],\dots, \mathbb E[X_d]), \quad 
  \mathbb E[X_i] := \int_\Omega X_i (\omega) \mathrm d P(\omega).
\]
If $\bm X$ is \emph{square-integrable}, i.e.\ $\bm X \in L_2(\Omega,P)$, then
the \emph{covariance matrix} is defined as 
\[
  \mathrm{Cov}[\bm X] := \big( \mathrm{Cov}[X_i, X_j] \big)_{i,j=1}^d  
  = 
  \big( \mathbb E[ (X_i - \mathbb E[X_i]) (X_j - \mathbb E[X_j])] \big)_{i,j=1}^d 
  \in \mathbb R^{d\times d}	.
\]

The following theorem, which is a straight-forward generalization of
\cite[Thm~8.12]{Kle20} from $\mathbb R$ to $\mathbb R^d$, verifies the
definition of the conditional expectation of a random variable.

\begin{thm}
  \label{the:condExpectation}
  Let $(\Omega, \mathcal A, P)$ be a probability space, and let
  $\bm X:\Omega \to \mathbb R^d$ be a random vector with
  $\bm X \in L_1(\Omega,P)$. For any sub-$\sigma$-algebra
  $\mathcal F \subset \mathcal A$, there exists a random variable
  $\bm Z:\Omega \to \mathbb R^d$ with the following properties:
  \begin{enumerate}[\upshape(i)]
  \item $\bm Z$ is $\mathcal F$-measurable, i.e.,
    $\bm Z^{-1}(B) \in \mathcal F$ for any
    $B \in \mathcal B(\mathbb R^d)$, and
  \item the expectations are equal on $\mathcal F$, i.e., for all
    $F \in \mathcal F$ holds
    $$\int_F \bm X(\omega) \mathrm d P(\omega) = \int_F \bm Z(\omega) \mathrm d P(\omega).$$
  \end{enumerate}    
  If $\tilde{\bm Z}:\Omega \to \mathbb R^d$ is another random vector
  satisfying {\upshape(i)} and {\upshape(ii)}, then
  \[
    P(F_*) = 0, \quad F_* := \{\omega \;:\; \bm Z(\omega) \ne \tilde{\bm Z}(\omega)\} \in \mathcal F.
  \]
  In particular, $\bm Z$ is uniquely determined almost everywhere
  (with respect to the measure $P|_\mathcal F$).
\end{thm}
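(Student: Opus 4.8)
The plan is to reduce everything to the one-dimensional statement \cite[Thm~8.12]{Kle20} applied separately in each coordinate. Writing $\bm X = (X_1, \ldots, X_d)$, I would first record that every coordinate is integrable: since $|X_i(\omega)| \le |\bm X(\omega)|$ pointwise, the hypothesis $\bm X \in L_1(\Omega, P)$ forces $X_i \in L_1(\Omega, P)$ for each $i$. The scalar theorem then furnishes, for each $i$, an $\mathcal F$-measurable random variable $Z_i : \Omega \to \mathbb{R}$ with $\int_F X_i \, \mathrm d P = \int_F Z_i \, \mathrm d P$ for all $F \in \mathcal F$. I set $\bm Z := (Z_1, \ldots, Z_d)$.

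For existence, I would verify properties (i) and (ii) for this $\bm Z$. Property (i) holds because a map into $\mathbb{R}^d$ is $\mathcal F$-measurable precisely when each of its coordinate functions is, and every $Z_i$ is $\mathcal F$-measurable by construction. Property (ii) holds because the vector-valued integral is defined coordinatewise, so that $\int_F \bm X \, \mathrm d P = \big(\int_F X_1 \, \mathrm d P, \ldots, \int_F X_d \, \mathrm d P\big) = \big(\int_F Z_1 \, \mathrm d P, \ldots, \int_F Z_d \, \mathrm d P\big) = \int_F \bm Z \, \mathrm d P$ for every $F \in \mathcal F$.

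For uniqueness, let $\tilde{\bm Z} = (\tilde Z_1, \ldots, \tilde Z_d)$ be any random vector satisfying (i) and (ii). Testing (ii) coordinatewise shows that each $\tilde Z_i$ is a version of the scalar conditional expectation of $X_i$, so the uniqueness clause of the scalar theorem yields $\{\omega : Z_i(\omega) \ne \tilde Z_i(\omega)\} \in \mathcal F$ with $P(\{Z_i \ne \tilde Z_i\}) = 0$. Since $F_* = \bigcup_{i=1}^d \{Z_i \ne \tilde Z_i\}$ is a finite union of $\mathcal F$-measurable $P$-null sets, it lies in $\mathcal F$ and satisfies $P(F_*) = 0$, which is the asserted conclusion.

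I do not expect a genuine obstacle here: the only points requiring care are the standard facts that both measurability and integration of $\mathbb{R}^d$-valued maps are coordinatewise notions, and that a finite union of null sets is null. These are exactly what permit the scalar theorem to be invoked independently in each coordinate and then reassembled into the vector-valued statement.
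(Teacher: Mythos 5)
Your proof is correct and is exactly the coordinatewise reduction the paper has in mind: the authors give no proof at all, stating only that the theorem is a ``straight-forward generalization'' of the scalar result \cite[Thm~8.12]{Kle20}, and your argument supplies precisely that routine generalization. All the steps you flag as needing care (coordinatewise measurability and integration, finite unions of null sets) are handled correctly.
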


The random vector $\bm Z$ is called the \emph{conditional expectation
  of $\bm X$ given $\mathcal F$}, and we use the notation
$\mathbb E[\bm X | \mathcal F] := \bm Z$.  For
$\bm X:\Omega \to \mathbb R^d$, and for a random variable
$\bm Y:\Omega \to \mathbb R^{p}$, we define the \emph{conditional
  expectation of $\bm X$ given $\bm Y$} by
$\mathbb E[\bm X | \bm Y]:= \mathbb E[\bm X | \sigma(\bm Y)] : \Omega
\to \mathbb R^{p}$, where $\sigma(\bm Y)$ denotes the smallest
$\sigma$-algebra containing the set system
$\{ \bm Y^{-1}(B) \;:\; B \in \mathcal B(\mathbb R^{p})\})$.  By the
factorization lemma \cite[Cor~1.97]{Kle20}, there exists a measurable
function $\varphi:\mathbb R^p \to \mathbb R^d$ such that
\begin{equation}     \label{eq:cond-exp}
  \mathbb E[\bm X|\bm Y] (\omega)= \varphi(\bm Y (\omega)), \qquad \omega \in \Omega.
\end{equation}
We call $\varphi$ the \emph{conditional expectation of $\bm X$ given
  $\bm Y(\omega) = \bm y$} and use the notation
$\mathbb E[\bm X | \bm Y = \bm y] := \varphi(\bm y)$.  Denoting by
$\mathbb E_{\bm Y}$ the expectation with respect to probability space
$(\mathbb R^p, \mathcal B(\mathbb R^p), P_{\bm Y})$, where $P_{\bm Y}$
is the push-forward measure of $\bm Y$, we observe
\[
  \mathbb E[\bm X] = \int_{\mathbb R^p} \mathbb E[\bm X| \bm Y=\bm y] \, \mathrm d P_{\bm Y}(\bm y) 
  = \mathbb E_{\bm Y} \left[\mathbb E[\bm X | \bm Y = \cdot] \right].
\]

\section{Principal Curves} \label{sec:pc}
Throughout this paper, we consider smooth Jordan curves
$\Gamma:[0,\ell_{\Gamma}] \to \mathbb R^d$ parameterized by their
arc-length.  This means that $\Gamma \in C^\infty([0,\ell_\Gamma])$
does not intersect itself, i.e., $s_1 \ne s_2$ implies
$\Gamma(s_1) \ne \Gamma(s_2)$.  The distance to the curve is given by
$d(\bx,\Gamma):=\min_{\bm y\in\Gamma}\|\bx - \bm y\|$, where the
minimum is realized at least once since $\Gamma$ is compact.  If $\bx$
has several such closest points on $\Gamma$, then $\bx$ is said to be
an \emph{ambiguity point}.  The set of ambiguity points $\mathcal A_\Gamma$
is of Lebesgue measure zero, see \cite[Lem~4.3.2]{Has84} and
\cite[Prop~6]{HS89}.  The \emph{projection index}
$\lambda_\Gamma : \mathbb R^d \to [0,\ell_\Gamma]$ was introduced by
Hastie \cite{Has84} as
\[
  \lambda_\Gamma(\bx) := \sup\left\{ s \in [0,\ell_\Gamma] \;:\; \| \bx - \Gamma(s) \| 
    = \inf_{t \in [0,\ell_\Gamma]}\| \bx - \Gamma(t) \|\right\}, \qquad \bx \in \mathbb R^d.
\]
Based on the projection index, we define the \emph{projection}
$\pi_\Gamma : \mathbb R^d \to \Gamma$ as composition
$\bm x \mapsto \Gamma \circ \lambda_\Gamma (\bm x)$. By slight abuse
of notation we identify $\Gamma$ with its image here.  Note that the
projection is always singe-valued even for the ambiguity points.
Hastie \cite[Thm~4.1]{Has84} has shown that $\lambda_\Gamma$ is
measurable for smooth curves.  Hence, for a random variable
$\bm X: \Omega \rightarrow \mathbb R^d$, the composition
$\lambda_\Gamma \circ \bm X : \Omega \to [0,\ell_\Gamma]$ is also a
random variable as well as $\Gamma(\lambda_\Gamma(\bm X))$, and we have
$\pi_\Gamma (\bm X) = \Gamma(\lambda_\Gamma(\bm X))$.  By the
factorization in \eqref{eq:cond-exp}, we can write the conditional
expectation as
\begin{equation}     \label{eq:exp-proj}
  \mathbb E[\bm X | \lambda_\Gamma \circ \bm X](\omega) 
  = \phi \circ \lambda_\Gamma \circ \bm X (\omega),
\end{equation}
with $\mathbb E[ \bm X | \lambda_\Gamma(\bm X) = s] = \phi(s)$.  A
curve $\Gamma$ is called \emph{self-consistent} if and only if
\[
  \mathbb E[ \bm X | \lambda_\Gamma(\bm X) = s] = \Gamma (s) \qquad P_{\lambda_\Gamma(\bm X)}-\text{a.e.}
\]
for all $s \in [0, \ell_\Gamma]$.  A smooth, self-consistent Jordan
curve is called a \emph{principal curve} of $\bm X$ \cite{HS89}.

For uniformly distributed random variables
$\bm X: \Omega \rightarrow \mathbb R^2$, the definition says that a
principal curve is characterized by the fact that the barycenter of
the region $\lambda^{-1}_\Gamma(I)$ related to some interval
$I \subset [0,\ell_\Gamma]$ converges to $\Gamma$ if the length of $I$
becomes arbitrary small.  Numerically, some example regions
$\lambda^{-1}_\Gamma(I)$ may be calculated using the Voronoi cells
with respect to finitely many samples on $\Gamma$, which allow a
numerical validation whether a curve is principal for a given uniform
distribution.  This definition and numerical
interpretation is illustrated in Figure \ref{fig:1}.

\begin{figure} \label{fig:1}
  \begin{center}
    \includegraphics[width=0.35\textwidth]{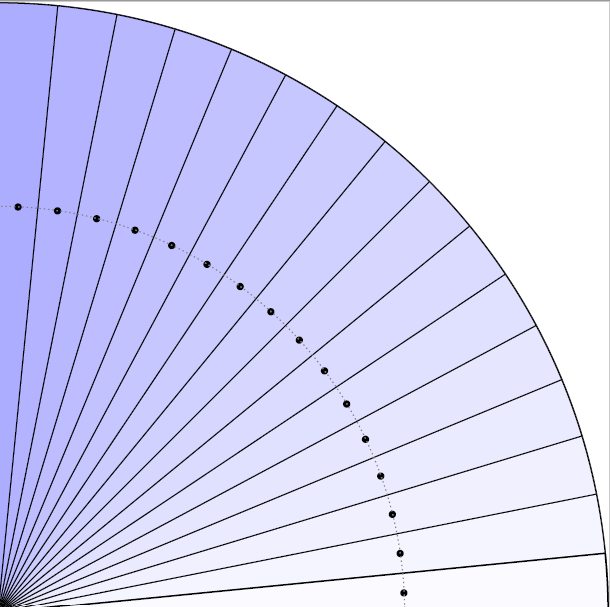}
    \hspace{1cm}
    \includegraphics[width=0.35\textwidth]{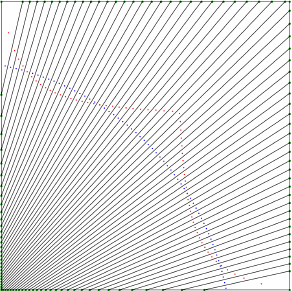}
  \end{center}
  \caption{ Two examples for curves and uniformly distributed random
    variable $\bm X$.  Left: $\bm X$ corresponds to the uniform
    distribution on the quarter circle of radius 1. Its principal
    curve is just the quarter circle of radius $\frac23$.  The
    barycenters of the Voronoi cells converge to the curve if the
    curve is sampled denser.  Right: $\bm X$ corresponds to the
    uniform distribution on the square of side length 1.  For a
    quarter parabolic curve (blue dots), we calculate the barycenters
    (red dots) of the corresponding Voronoi cells.  These centers do
    not converge to the curve even if the regions become arbitrary
    thin; so the shown curve is not principal.}
\end{figure}

Principal curves have a nice variational characterization. 
To this end, we consider the energy functional 
\begin{equation}
  \label{eq:PC_energy}
  D_{\bm X}^2(\Gamma) := \mathbb E[ \| \bm X - \Gamma (\lambda_\Gamma(\bm X)) \|^2],
\end{equation}
whose critical points are principal curves.

\begin{thm}[Hastie \& Stuetzle {\cite[Prop~4]{HS89}}]
  Let $\bm X: \Omega \rightarrow \mathbb R^d$ be a random variable
  with finite covariance and smooth density.  Further, let
  $\Gamma :[0,\ell_\Gamma]\to \mathbb R^d$ be a smoooth Jordan curve
  parameterized by arc-length.  Then the curve $\Gamma$ is a principal
  curve of $\bm X$ if and only if
  \[
    \frac{\mathrm d D_{\bm X}^2(\Gamma + t \Delta)}{\mathrm d t} \Big|_{t=0} = 0,
  \]
  for any curve $\Delta \in C^{\infty}([0,\ell_\Gamma])$ with
  $\| \Delta \| \le 1$, $\| \Delta' \| \le 1$.
\end{thm}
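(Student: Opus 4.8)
The plan is to compute the first variation of $D_{\bm X}^2$ explicitly and to recognize its vanishing as the self-consistency condition. Writing $\Gamma_t := \Gamma + t\Delta$ and $f(\bm x, s, t) := \|\bm x - \Gamma_t(s)\|^2$, the squared distance becomes $g_{\bm x}(t) := d(\bm x, \Gamma_t)^2 = \min_{s\in[0,\ell_\Gamma]} f(\bm x, s, t)$, so that $D_{\bm X}^2(\Gamma_t) = \mathbb E[g_{\bm X}(t)]$. The heart of the matter is a pointwise envelope (Danskin-type) argument: for every $\bm x \notin \mathcal A_\Gamma$ the minimizer $s = \lambda_\Gamma(\bm x)$ is unique, and I would sandwich the difference quotient by the two minimality inequalities $g_{\bm x}(t) \le f(\bm x, \lambda_\Gamma(\bm x), t)$ and $g_{\bm x}(0) \le f(\bm x, \lambda_{\Gamma_t}(\bm x), 0)$. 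Dividing by $t$, letting $t \to 0$, and using continuity of the argmin $t \mapsto \lambda_{\Gamma_t}(\bm x)$ at non-ambiguity points, the upper and lower bounds both collapse to $\partial_t f(\bm x, \lambda_\Gamma(\bm x), 0)$. The upshot is that the variation of the projection index contributes nothing to first order and
\[
  g_{\bm x}'(0) = -2\big\langle \bm x - \Gamma(\lambda_\Gamma(\bm x)),\, \Delta(\lambda_\Gamma(\bm x))\big\rangle.
\]

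Next I would pass the derivative through the expectation. The bound $\|\Delta\| \le 1$ guarantees that $\Gamma_t$ stays within Hausdorff distance $|t|$ of $\Gamma$, whence $|g_{\bm x}(t) - g_{\bm x}(0)|/|t| \le 2\|\bm x\| + C$ for small $t$ by factoring the difference of squares; since $\bm X$ has finite covariance this dominating function lies in $L_1(P_{\bm X})$ and dominated convergence applies. Here the smoothness of the density of $\bm X$ enters, ensuring $P_{\bm X}(\mathcal A_\Gamma) = 0$ so that the pointwise formula holds almost surely. Conditioning on $S := \lambda_\Gamma(\bm X)$ and pulling the $S$-measurable factors $\Gamma(S)$, $\Delta(S)$ out of the inner conditional expectation via the tower property yields
\[
  \frac{\mathrm d D_{\bm X}^2(\Gamma + t\Delta)}{\mathrm d t}\Big|_{t=0}
  = -2\int_0^{\ell_\Gamma} \big\langle \mathbb E[\bm X \mid \lambda_\Gamma(\bm X) = s] - \Gamma(s),\, \Delta(s)\big\rangle \,\mathrm d P_{S}(s).
\]

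With this identity the equivalence is almost immediate. If $\Gamma$ is self-consistent, then $\mathbb E[\bm X \mid \lambda_\Gamma(\bm X) = s] = \Gamma(s)$ holds $P_S$-a.e., the integrand vanishes, and the derivative is zero for every admissible $\Delta$. For the converse, I would first remove the normalization constraints: since the integral is linear in $\Delta$, every smooth $\Delta$ becomes admissible after multiplication by a sufficiently small scalar, so vanishing for all admissible $\Delta$ forces $\int_0^{\ell_\Gamma}\langle h(s), \Delta(s)\rangle\,\mathrm d P_S(s) = 0$ for all smooth $\Delta$, where $h := \mathbb E[\bm X\mid\lambda_\Gamma(\bm X)=\,\cdot\,] - \Gamma$. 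Since $h \in L_2([0,\ell_\Gamma], P_S)$ by conditional Jensen's inequality and finite covariance, and smooth functions are dense in this space, testing componentwise against approximations of $h$ gives $\int \|h\|^2\,\mathrm d P_S = 0$, i.e. $h = 0$ $P_S$-a.e., which is exactly self-consistency.

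The main obstacle is the envelope step: one must verify that $t \mapsto \lambda_{\Gamma_t}(\bm x)$ is continuous at $t = 0$ for $\bm x \notin \mathcal A_\Gamma$, so that the lower sandwich bound converges correctly, and argue that boundary minimizers $s \in \{0, \ell_\Gamma\}$ cause no difficulty — which is automatic in the sandwich formulation, as it never invokes the interior first-order condition $\partial_s f = 0$. A secondary technical point is the interchange of differentiation and expectation, but the Lipschitz dependence of the distance on the curve together with the second-moment hypothesis makes the dominating function explicit.
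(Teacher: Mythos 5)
The paper itself does not prove this theorem; it is imported verbatim from Hastie \& Stuetzle \cite{HS89} (their Proposition~4), so there is no internal proof to compare against. Your argument is correct and is essentially the classical one underlying that reference: the two-sided envelope sandwich legitimately kills the first-order contribution of the moving projection index (the only points needing explicit care are the continuity of $t\mapsto\lambda_{\Gamma_t}(\bm x)$ at non-ambiguity points, a routine compactness-plus-uniqueness argument, and $P_{\bm X}(\mathcal A_\Gamma)=0$, which follows from the Lebesgue-null property of $\mathcal A_\Gamma$ quoted in Section~3 together with the existence of a density --- you flag both); the Lipschitz bound on the difference quotient justifies dominated convergence under the finite-covariance hypothesis; and the tower property plus rescaling of $\Delta$ and density of $C^\infty([0,\ell_\Gamma])$ in $L_2([0,\ell_\Gamma],P_{\lambda_\Gamma(\bm X)})$ correctly yields the converse, i.e.\ self-consistency. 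I see no gap.
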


\section{Characterization via Differential Equations} \label{sec:ds}
%
A third characterization of principal curves in the plane is given by
a dynamical system \cite{DS93}.  In this section, we generalize the
derivation to curves $\Gamma: [0,\ell_\Gamma] \rightarrow \mathbb R^d$
in higher dimensions, i.e.\ $d\ge 2$.  For this, we associate to
$\Gamma(s)$ a reference frame
$T(s), N_1(s), \dots, N_{d-1}(s) \in \mathbb S^{d-1}$ smoothly
depending on $s$, where $T(s) := \Gamma'(s) \in \mathbb S^{d-1}$
denotes the tangent, and where $N_1(s), \dots, N_{d-1}(s)$ are
pairwise orthogonal vectors spanning the normal space of $\Gamma$ in
$s \in [0,\ell_\Gamma]$.  Recall that the curvature of a curve is
given by $\kappa(s):=\|T'(s)\|$.  The \emph{principal curvatures} with
respect to the chosen moving reference frame are now defined by
\begin{equation} \label{eq:kappa}
  \kappa_i(s) :=\langle T\,'(s),N_i(s) \rangle, \qquad i=1, \dots, d-1.
\end{equation}
In other words, the principal curvatures
$(\kappa_1,\dots, \kappa_{d-1})$ are the coordinates of the normal
$\Gamma'' = T'$ with respect to the frame $N_1, \dots, N_{d-1}$.  Due
to the orthogonality $\langle T, N_i \rangle = 0$, we have
\[
  0 = \frac{d}{ds} \langle T(s), N_i(s) \rangle = \langle T'(s),
  N_i(s) \rangle + \langle T(s), N_i'(s) \rangle,
  \qquad i=1, \dots, d-1,
\]
implying
\begin{equation}
  \label{eq:kappa_TN'}
  \kappa_i(s) = - \langle T, N_i'(s) \rangle, \qquad i=1,\dots, d-1.
\end{equation}

There are different kind of frames in the literature, e.g., the Frenet
frame, the Bishop frame, and various modifications
\cite{Bishop1975,YILMAZ2010764}.  The Frenet frame is unique, but may
fail to be well defined at certain points even if the
curve is sufficiently regular.  In contrast, the Bishop frame---also
known as parallel frame---is defined at every point and
varies continuously as we move along the curve.  This frame is described by
the system of first order differential equations
\begin{align}
  \label{eq:Bishop2}
  \begin{bmatrix}
    T'(s)\\N_1'(s)\\N_2'(s)\\\vdots\\N_{d-1}'(s)
  \end{bmatrix}=\begin{bmatrix}
    0& \kappa_1(s)&\kappa_2(s)&\cdots&\kappa_{d-1}(s)\\
    -\kappa_1(s)&0&0&\cdots&0\\
    -\kappa_2(s)&0&0&\cdots&0\\
    \vdots & \vdots&\vdots&\ddots&\vdots \\
    -\kappa_{d-1}(s) &0&0&\cdots &0
  \end{bmatrix}\cdot \begin{bmatrix}
    T(s)\\N_1(s)\\N_2(s)\\\vdots\\N_{d-1}(s)
  \end{bmatrix}.
\end{align}
In the numerical part, we will rely on a different frame based on spherical coordinates.

Henceforth, let $\mathbb X$ be a compact region in $\mathbb R^d$, which will
later denote the support of the density $p_{\bm X}$.  The
\emph{normal coordinate map of} $\Gamma$ with respect to the chosen reference
frame is the map
$\nu_\Gamma:[0, \ell_\Gamma]\times\mathbb R^{d-1}\to\mathbb R^d$ given
by
\begin{equation} 	\label{eq:normal-map}
  \nu_{\Gamma}(s,u_1,\dots, u_{d-1})\coloneqq \Gamma(s) + u_1\,N_1(s) + \cdots u_{d-1} \,N_{d-1}(s),
\end{equation}
and the \emph{normal coordinate transformation}
$\mu_{\Gamma}: \mathbb X \to [0,\ell_\Gamma]\times \mathbb R^{d-1}$ is
defined by
\begin{equation} 	\label{eq:normal-transform}
  \mu_{\Gamma}(\bx)\coloneqq 
  \begin{pmatrix}
    \lambda_\Gamma(\bx)
    \\
    \langle \bx- \pi_\Gamma(\bx), N_1(\lambda(\bx))\rangle
    \\
    \vdots
    \\
    \langle  \bx- \pi_\Gamma(\bx), N_{d-1}(\lambda(\bx))\rangle
  \end{pmatrix}.
\end{equation}
The components $(s,u_1,\dots, u_{d-1})$ of $\mu_{\Gamma}(\bx)$ are
called the \emph{normal coordinates at} $\bx$.  For given
$s\in[0,\ell_\Gamma]$, let $\mathbb X(s)$ be the cross-section of
$\mathbb X$ with the hyperplane
$\Gamma(s)+u_1N_1(s)+ \cdots + u_{d-1}N_{d-1}(s)$.  The normal
coordinates around $\Gamma(s)$ in $\mathbb X(s)$ are denoted by
\begin{align*}
\mathcal  R(s)=\{(u_1,\dots,u_{d-1})\in\mathbb R^{d-1}\;:\;
  &\pi_\Gamma(\Gamma(s)+u_1 N_1(s)+ \cdots + u_{d-1}N_{d-1}(s)) = s,\\
  & \Gamma(s)+u_1 N_1(s)+ \cdots + u_{d-1}N_{d-1}(s) \not \in
    \mathcal A_\Gamma\}.
\end{align*}

For the later substitution, we need that $\nu_\Gamma$ is a
diffeomorphism on $(\lambda_\Gamma^{-1}(I) \cap \mathbb X) \setminus
\mathcal A_\Gamma$ for all
measurable $I \subseteq [0, \ell_\Gamma]$ meaning that
$$(\nu_\Gamma
\circ \mu_\Gamma)|_{(\lambda_\Gamma^{-1}(I) \cap \mathbb X) \setminus
  \mathcal A_\Gamma} = \id|_{(\lambda_\Gamma^{-1}(I) \cap \mathbb X)
  \setminus \mathcal A_\Gamma},$$ and that $\nu_\Gamma$ and $\mu_\Gamma$ are
differentiable on the related domains.  The partial derivatives of $\nu_\Gamma$ are
given by
\begin{equation*}
  \begin{aligned}
    \frac{\partial \nu_{\Gamma}}{\partial s} & =\Gamma'(s)+u_1\,N'_1(s)+ \cdots u_{d-1}\,N'_{d-1}(s)
    ,\\
    \frac{\partial \nu_{\Gamma}}{\partial u_i} & =N_i(s),\qquad i=1,\dots,d-1.
  \end{aligned}
\end{equation*}
Using \eqref{eq:kappa_TN'}, and exploiting the orthonormality of the
frame, we obtain the Jacobian determinant 
\begin{align*}
  |\det(\bm J_{\nu_\Gamma})|(s,\bm u)
  &= |T(s) + u_1 \, N_1'(s) + \cdots + u_{d-1} \, N_{d-1}'(s), N_1(s),\dots, N_{d-1}(s)|
  \\
  &= |\langle T(s), T(s) + u_1 \, N_1'(s) + \cdots + u_{d-1} \, N_{d-1}'(s) \rangle T(s), N_1(s),\dots, N_{d-1}(s)|\\
  &= 1 + u_1 \langle T(s), N_1'(s) \rangle + \cdots + u_{d-1} \langle T(s), N_{d-1}'(s) \rangle\\
  & = 1- u_1 \kappa_1 - \cdots - u_{d-1} \kappa_{d-1}. 
\end{align*}
Now we can describe the self-consistency of curves with respect to a
random variable based on its \emph{transverse moments}
\begin{equation} \label{eq:trans-mom}
  \mu_{\bm j}(s) :=
  \int\limits_{\bm u \in \mathcal  R(s)} u_{1}^{j_1} \cdots u_{d-1}^{j_{d-1}} 
  \, p_{\bm X}\Big(\Gamma (s)+\sum_{i=1}^{d-1} u_{i}\,N_{i}(s) \Big) \, d \bm u, \qquad \bm j \in \mathbb N_0^{d-1}.
\end{equation}
Further, the canonical basis of $\mathbb R^{d-1}$ is denoted by
$\bm e_1, \dots, \bm e_{d-1} \in \mathbb R^{d-1}$.

\begin{thm} \label{theo:moments}
  Let $\bm X: \Omega \rightarrow \mathbb R^d$ 
  be a random variable having a  distribution 
  with smooth density function $p_{\bm X}$, 
  where $\mathrm{supp} \, p_{\bm X} = \mathbb X$
  and $p_{\bm X}$ 
  is strictly positive in the interior of $\mathbb X$.
  We consider smooth Jordan curves $\Gamma: [0,\ell_{\Gamma}] \rightarrow \mathbb X$ for which 
  $\nu_\Gamma$ is a diffeomorphism on 
  $(\lambda_\Gamma^{-1}([0,\ell_{\Gamma}]) \cap \mathbb X) \setminus \mathcal A_\Gamma$.
  Then $\Gamma$ is self-consistent with respect $\bm X$
  if and only if its principal curvature $\mathbf{\kappa}$
  fulfills the linear system of equations
  \begin{equation} \label{eq:curvature_condition}
    \bm \mu (s) = \bm G(s) \big(\kappa_1(s), \dots, \kappa_{d-1}(s)\big)^\tT
  \end{equation}
  with
  \begin{equation*}
    \bm G(s):=\big( \mu_{\bm e_i + \bm e_j}(s) \big)_{i,j=1}^{d-1}, \quad 
    \bm \mu (s) := \big( \mu_{\bm e_i}(s) \big)_{i=1}^{d-1}.
  \end{equation*}
  The Gram matrix $\bm G(s)$ is invertible, so that
  \begin{equation}
    \label{eq:cur-sys}
    \kappa_j=(\bm G^{-1} \bm\mu)_j, \qquad j = 1,\dots, d-1.
  \end{equation}
\end{thm}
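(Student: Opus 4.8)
The plan is to evaluate the conditional expectation $\mathbb E[\bm X \mid \lambda_\Gamma(\bm X)=s]$ explicitly in normal coordinates and to extract the self-consistency condition from it. First I would disintegrate the law of $\bm X$ along the fibres of $\lambda_\Gamma$. Testing the factorization \eqref{eq:cond-exp} against an arbitrary bounded measurable function of $s$ and applying the change of variables $\bm x = \nu_\Gamma(s,\bm u)$, for which $d\bm x = |\det \bm J_{\nu_\Gamma}|(s,\bm u)\,ds\,d\bm u$, yields
\[
  \mathbb E[\bm X \mid \lambda_\Gamma(\bm X)=s]
  = \frac{\int_{\mathcal R(s)} \nu_\Gamma(s,\bm u)\, p_{\bm X}(\nu_\Gamma(s,\bm u))\, |\det \bm J_{\nu_\Gamma}|(s,\bm u)\, d\bm u}
         {\int_{\mathcal R(s)} p_{\bm X}(\nu_\Gamma(s,\bm u))\, |\det \bm J_{\nu_\Gamma}|(s,\bm u)\, d\bm u}
\]
for $P_{\lambda_\Gamma(\bm X)}$-almost every $s$. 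The diffeomorphism hypothesis makes this substitution admissible on $(\lambda_\Gamma^{-1}([0,\ell_\Gamma])\cap\mathbb X)\setminus\mathcal A_\Gamma$, and the Lebesgue-null ambiguity set $\mathcal A_\Gamma$ is discarded. On $\mathcal R(s)$ the point $\nu_\Gamma(s,\bm u)$ has $\Gamma(s)$ as a nearest point, so $\bm u$ lies inside the focal locus where the Jacobian $1 - u_1\kappa_1 - \cdots - u_{d-1}\kappa_{d-1}$ is strictly positive and the absolute value may be dropped.

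Next I would impose $\mathbb E[\bm X \mid \lambda_\Gamma(\bm X)=s]=\Gamma(s)$, which is equivalent to the numerator equalling $\Gamma(s)$ times the denominator. Writing $\nu_\Gamma(s,\bm u)=\Gamma(s)+\sum_{i=1}^{d-1}u_iN_i(s)$, the $\Gamma(s)$-terms cancel and self-consistency reduces to
\[
  \int_{\mathcal R(s)} \Big(\sum_{i=1}^{d-1}u_iN_i(s)\Big)\, p_{\bm X}(\nu_\Gamma(s,\bm u))\,\big(1-u_1\kappa_1-\cdots-u_{d-1}\kappa_{d-1}\big)\, d\bm u = \bm 0 .
\]
Because $N_1(s),\dots,N_{d-1}(s)$ are orthonormal, pairing with each $N_i(s)$ splits this into the $d-1$ scalar equations $\int_{\mathcal R(s)} u_i\, p_{\bm X}(\nu_\Gamma)\,(1-\sum_k u_k\kappa_k)\,d\bm u=0$. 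Expanding the product and invoking the definition \eqref{eq:trans-mom} of the transverse moments, with $\int_{\mathcal R(s)} u_i\,p_{\bm X}(\nu_\Gamma)\,d\bm u=\mu_{\bm e_i}(s)$ and $\int_{\mathcal R(s)} u_iu_k\,p_{\bm X}(\nu_\Gamma)\,d\bm u=\mu_{\bm e_i+\bm e_k}(s)$, each equation becomes $\mu_{\bm e_i}(s)=\sum_{k=1}^{d-1}\mu_{\bm e_i+\bm e_k}(s)\,\kappa_k(s)$, which is precisely the $i$-th row of \eqref{eq:curvature_condition}. Since every step is an equivalence, this proves both implications at once.

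It remains to show that $\bm G(s)$ is invertible. I would recognize it as a Gram matrix: the bilinear form $\langle f,h\rangle_s:=\int_{\mathcal R(s)} f(\bm u)h(\bm u)\,p_{\bm X}(\nu_\Gamma(s,\bm u))\,d\bm u$ is an inner product since $p_{\bm X}>0$ on the interior of $\mathbb X$, and $G_{ij}(s)=\langle u_i,u_j\rangle_s$, so $\bm G(s)$ is symmetric and positive semidefinite. For $\bm c\neq\bm 0$ one has $\bm c^\tT\bm G(s)\bm c=\int_{\mathcal R(s)}\big(\sum_i c_iu_i\big)^2 p_{\bm X}(\nu_\Gamma)\,d\bm u$; the linear form $\sum_i c_iu_i$ vanishes only on a $(d-2)$-dimensional subspace, whereas $\mathcal R(s)$ is a full $(d-1)$-dimensional cross-section of positive Lebesgue measure on which $p_{\bm X}$ is strictly positive, so this integral is strictly positive. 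Hence $\bm G(s)$ is positive definite, and \eqref{eq:cur-sys} follows by inverting the system.

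The hard part is the opening step: rigorously identifying the abstract conditional expectation of Theorem~\ref{the:condExpectation} with the explicit fibre average. This means disintegrating $P_{\bm X}$ through the normal coordinate transformation $\mu_\Gamma$, checking that the diffeomorphism assumption legitimizes the fibre-wise change of variables, and confirming that $\mathcal A_\Gamma$ contributes nothing. Once this representation is in hand, the cancellation, the projection onto the moving frame, and the Gram-matrix argument are all routine.
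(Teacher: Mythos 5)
Your proposal is correct and follows essentially the same route as the paper: change of variables via $\nu_\Gamma$ with Jacobian $1-\sum_i u_i\kappa_i(s)$ (positive on $\mathcal R(s)$), cancellation of the $\Gamma(s)$-terms, projection onto the orthonormal frame $N_1,\dots,N_{d-1}$, identification of the resulting integrals with the transverse moments, and invertibility of $\bm G(s)$ as a Gram matrix of the linearly independent monomials $u_1,\dots,u_{d-1}$ with respect to the weight $p_{\bm X}>0$. The only real difference is that the paper sidesteps the disintegration you flag as ``the hard part'': instead of writing $\mathbb E[\bm X\mid\lambda_\Gamma(\bm X)=s]$ explicitly as a fibre average, it uses the defining integral identity of the conditional expectation over arbitrary measurable $I\subseteq[0,\ell_\Gamma]$ to obtain \eqref{eq:integral2}, performs the change of variables on the double integral, and then localizes in $s$ (invoking the smoothness of $p_{\bm X}$ and continuity of the parameter integral to pass from almost every $s$ to all $s$ --- a step you should also make explicit, since your fibre identity only holds $P_{\lambda_\Gamma(\bm X)}$-a.e.).
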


\begin{proof}
  Based on Theorem~\ref{the:condExpectation} and the factorization of
  the conditional expectation in \eqref{eq:exp-proj}, for all
  measurable sets $I \subseteq [0,\ell_\Gamma]$, the self-consistency
  $\phi(s) = \mathbb E[\bm X|\lambda_{\Gamma}(\bm X)=s] = \Gamma(s)$
  means
  \begin{align*}
    \smashoperator[r]{\int_{(\lambda_\Gamma \circ \bm X)^{-1}(I)}} \bm X (\omega) \, dP(\omega)
    &= \smashoperator{\int_{(\lambda_\Gamma \circ \bm X)^{-1}(I)}} \mathbb E[\bm X | \lambda_\Gamma \circ \bm X](\omega) \, dP(\omega)
      = \smashoperator{\int_{(\lambda_\Gamma \circ \bm X)^{-1}(I)}} 
      (\phi \circ \lambda_\Gamma \circ \bm X) (\omega) \, dP(\omega)
    \\
    &= \smashoperator{\int_{(\lambda_\Gamma \circ \bm X)^{-1}(I)}} 
      (\Gamma \circ \lambda_\Gamma \circ \bm X) (\omega) \, dP(\omega).
  \end{align*}
  This can be rewritten as
  \begin{equation}
    \label{eq:integral}
    \smashoperator[r]{\int_{\lambda^{-1}_{\Gamma}(I)}} \bx \, p_{\bm X}(\bx)\,d \bm x
    =\smashoperator{\int_{\lambda^{-1}_{\Gamma}(I)}} (\Gamma \circ \lambda_\Gamma) (\bx) \, p_{\bm X}(\bx)\,d\bx
    =\smashoperator{\int_{\lambda^{-1}_{\Gamma}(I)}} \pi_{\Gamma}(\bx) \, p_{\bm X}(\bx)\,d\bx
  \end{equation}
  or, equivalently,
  \begin{equation}
    \label{eq:integral2}
    \smashoperator[r]{\int_{\lambda^{-1}_{\Gamma}(I)}}(\bx-\pi_{\Gamma}(\bx)) \, p_{\bm X}(\bx)\,d\bx=0
  \end{equation}
  for all measurable sets $I\subseteq [0, \ell_\Gamma]$. 
  Regarding that $\nu_\Gamma$ is a diffeomorphism on
  $(\lambda_\Gamma^{-1}(I) \cap \mathbb X) \setminus \mathcal
  A_\Gamma$, and that $\mathcal A_\Gamma$ is a null set, 
  we can rewrite the last integral condition in terms of the normal coordinates
  \begin{equation*}
    \iint\limits_{(s,\bm u) \in \mu_\Gamma\left(\lambda^{-1}_\Gamma(I)
      \setminus \mathcal A_\Gamma\right)}
    \Big(\sum_{i=1}^{d-1} u_i \, N_i(s) \Big) \,
    p_{\bm X}\Big(\Gamma(s)+\sum_{i=1}^{d-1} u_i \, N_i(s) \Big)
    \, |\det(\bm J_{\nu_\Gamma})| d\bm u \, ds = 0.
  \end{equation*}
  Since the above equation holds for all $I \subseteq [0,\ell_\Gamma]$,  
  the integrand with respect to $s$ thus has to be zero almost surely, i.e. 
  \begin{equation}
    \label{eq:int-cond}
    \int\limits_{\bm u \in \mathcal  R(s)}
    \Big(\sum_{i=1}^{d-1} u_i \, N_i(s) \Big) \,
    p_{\bm X}\Big(\Gamma(s)+\sum_{i=1}^{d-1} u_i \, N_i(s) \Big)
    \, |\det(\bm J_{\nu_\Gamma})| \, d \bm u = 0,     \quad s\text{-a.s.}
  \end{equation}
  Exploiting that the  of $N_i(s)$ are orthonormal 
  and the smoothness of $p_{\bm X}$, we see that the parameter integral is continuous, so that we obtain
  \begin{align}
    \label{eq:integral5}
    & \int\limits_{\bm u \in \mathcal  R(s)} 
      u_j\,p_{\bm X}\Big(\Gamma(s)+\sum_{i=1}^{d-1} u_i\,N_i(s)\Big) \,
      \Big(1-\sum_{i=1}^{d-1} u_i \kappa_i(s) \Big) \, d \bm u=0, \qquad j=1,\dots, d-1
  \end{align}
  for all $s\in [0,\ell_\Gamma]$.  Note that the Jacobian determinant
  is here always positive, since $\mu_\Gamma$ and $\nu_\Gamma$ are
  diffeomorphisms.  Using the transverse moments notation in
  \eqref{eq:trans-mom} this can be rewritten as the system
  \eqref{eq:curvature_condition}.  Since the first-order monomials
  $\bm u \mapsto u_i$, $i=1,\dots, d-1$, are linear independent on
  every open subset in $\mathbb R^{d-1}$, we infer that the Gram
  matrix $\bm G$ is invertible.
\end{proof}

In the following, we fix the moving reference frame by parameterizing
the tangent vector using spherical coordinates
\begin{equation}    \label{eq:sph-cor}
  T(\bm \zeta) := 
  \begin{pmatrix}
    & \cos(\zeta_1) \\
    & \sin(\zeta_1) \cos(\zeta_2) \\
    & \sin(\zeta_1) \sin(\zeta_2) \cos(\zeta_3) \\
    & \vdots \\
    & \sin(\zeta_1) \sin(\zeta_2) \cdots \sin(\zeta_{d-2}) \cos(\zeta_{d-1}) \\
    & \sin(\zeta_1) \sin(\zeta_2) \cdots \sin(\zeta_{d-2}) \sin(\zeta_{d-1}) 
  \end{pmatrix} \in \mathbb S^{d-1}
\end{equation}
where $\zeta_i \in [0, \pi]$, $i = 1, \dots, d-2$ and
$\zeta_{d-1} \in [0,2\pi)$ are functions of $s$.  Note that
$T(\bm \zeta)$ and its partial derivatives
$T_{\zeta_i} := \frac{d}{d\zeta_i} T$ satisfy the recursions
\begin{equation}
  T(\bm \zeta) = 
  \begin{pmatrix}
    \cos(\zeta_1) \\
    \sin(\zeta_1) \, T(\bm \xi)
  \end{pmatrix},
  \qquad 
  T_{\zeta_1}(\bm \zeta) =
  \begin{pmatrix}
    -\sin(\zeta_1) \\
    \cos(\zeta_1) \, T(\bm \xi)
  \end{pmatrix},
  \qquad
  T_{\zeta_k}(\bm \zeta) = \sin(\zeta_1) 
  \begin{pmatrix}
    0 \\
    T_{\xi_{k-1}}(\bm \xi)
  \end{pmatrix},
\end{equation}
where $\bm \xi := (\zeta_2, \dots, \zeta_{d-1})^\tT$.  Consequently,
we have $\|T_{\zeta_1}\| = 1$ and
$\|T_{\zeta_k}\| = \prod_{i=1}^{k-1} \sin(\zeta_i)$, $k=2,\ldots,d-1$.
Defining  the
vectors
\[
  N_i (\mathbf \zeta) := \frac{T_{\zeta_i}(\mathbf \zeta)}{\| T_{\zeta_i}(\mathbf \zeta) \|} \qquad i=1, \dots, d-1,
\]
for $\zeta_k \not \in \{0, \pi\}$, $k=1,\ldots,d-2$, we see that
$\{T(\mathbf{\xi}) ,N_1(\mathbf{\xi}), \ldots,N_{d-1}(\mathbf{\xi})\}$
forms an orthonormal basis of $\mathbb R^d$.  Later we will argue that
the instabilities are not problematic for the numerical part.

Considering the curvature of $\Gamma$ given by
\[
  \frac{d}{ds} T(s) = \sum_{i=1}^{d-1} \zeta_i'(s) T_{\zeta_i}(s),
\]
we conclude from \eqref{eq:kappa} that
\begin{equation} \label{eq:kappa_ds}
  \kappa_i(s) = \left\langle \frac{d}{ds} T(s), N_i(s) \right\rangle = \zeta_i'(s) \|T_{\zeta_i}(s)\|, \qquad i=1,\dots,d-1.
\end{equation}
Inserting these identities into \eqref{eq:cur-sys}, the
self-consistency of a curve with respect to $\mathbf X$ is equivalent
to the system of differential equations
\begin{equation}
  \label{eq:ode_system:1}
  \begin{aligned}
    \Gamma'(s) & = T(\zeta(s)),\\
    \zeta'_j(s) & =  (\bm G^{-1} \bm \mu)_j /  \|T_{\zeta_i(s)}\|
    = (\bm G^{-1} \bm \mu)_j \Bigm\slash  \prod_{k=1}^{i-1} \sin(\zeta_k).
  \end{aligned}
\end{equation}
Note that the moments $\mu_{\bm j}(s)$ are functions depending both on
the point $\Gamma(s)$ and our specific frame characterized by
$\bm \zeta(s)$.  Therefore the right-hand side of the differential
equation system is a function in $(\Gamma(s), \bm \zeta(s))$ and may
be solved using linear multistep methods for instance.

The first and second order transverse moments can be interpreted
stochastically by defining the \emph{transverse density} at time $s$
by
\begin{equation}
  \label{eq:transverse-density}
  p_{\perp}(\bm u, s)\coloneqq\frac{p(\Gamma(s)+u_1 N_1(\bm \zeta(s))+ \cdots + u_{d-1} N_{d-1} (\bm \zeta (s)))}{\mu_{\bm 0}(s)},\quad \bm u\in\mathcal  R(s).
\end{equation} 
The mean and the covariance matrix of the transverse density
$p_{\perp}(\cdot, s)$ with respect to the normal coordinates are given
by
\[
  \begin{aligned}
    v_{\perp}(s) & := \Big( \frac{\mu_{\bm e_1}(s)}{\mu_{\bm 0}(s)}, \dots, \frac{\mu_{\bm e_{d-1}}(s)}{\mu_{\bm 0}(s)} \Big)^\tT,\\
    \mathrm{cov}_{\perp}(s) & := \Big( \frac{\mu_{\bm e_i + \bm e_j}(s)}{\mu_{\bm 0}(s)} \Big)_{i,j=1}^{d-1} - v_{\perp}(s) \, v_{\perp}(s)^\tT.
  \end{aligned} 
\]
Since the 0th transverse moment cancels out, we arrive at the ordinary system of differential equations
\begin{equation} \label{eq:ode_system:2}
  \begin{aligned}
    \Gamma'(s) & = T(\zeta(s))\\
    \bm \zeta'(s) & = D^{-1}(s) \Big( \mathrm{cov}_{\perp}(s) + v_{\perp}(s) v_{\perp}(s)^{\tT} \Big)^{-1} v_{\perp}(s)
  \end{aligned}
\end{equation}
with 
\[
  D(s) := \diag(\|T_{\zeta_1}(s)\|, \dots, \|T_{\zeta_{d-1}}(s)\|).
\]
Our findings are summarized in the following theorem.

\begin{thm}
  Let the assumptions of Theorem \ref{theo:moments} be fulfilled.
  Assume that $\Gamma$ can be represented by the above frame with $\zeta_k(s) \not \in \{0,\pi\}$, $k=1,\ldots, d-2$, 
  $s \in [0,\ell_{\Gamma}]$.
  Then the curve $\Gamma$ is self-consistent with respect to $\bm X$ if and only if it 
  is a solution of the system of differential equations \eqref{eq:ode_system:2}.
\end{thm}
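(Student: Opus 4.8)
The plan is to chain together the characterization already proved in Theorem~\ref{theo:moments} with the two reformulations of the curvature condition carried out just before the statement. Concretely, Theorem~\ref{theo:moments} tells us that $\Gamma$ is self-consistent with respect to $\bm X$ if and only if its principal curvatures satisfy the linear system \eqref{eq:cur-sys}, i.e. $\kappa_j = (\bm G^{-1}\bm\mu)_j$ for $j=1,\dots,d-1$ with $\bm G$ invertible. Since all hypotheses of Theorem~\ref{theo:moments} are assumed, this equivalence is available for free, and the only remaining task is to show that \eqref{eq:cur-sys} is equivalent to the differential system \eqref{eq:ode_system:2} once we commit to the spherical frame of \eqref{eq:sph-cor}.

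First I would use the spherical frame to express the principal curvatures through the angle functions. By \eqref{eq:kappa_ds} we have $\kappa_j(s) = \zeta_j'(s)\,\|T_{\zeta_j}(s)\|$, and the computation preceding the statement gives $\|T_{\zeta_1}\|=1$ and $\|T_{\zeta_k}\|=\prod_{i=1}^{k-1}\sin(\zeta_i)$. The standing assumption $\zeta_k(s)\notin\{0,\pi\}$ for $k=1,\dots,d-2$ guarantees $\sin(\zeta_i(s))\ne 0$, hence $\|T_{\zeta_j}(s)\|\ne 0$ and the diagonal matrix $D(s)=\diag(\|T_{\zeta_1}\|,\dots,\|T_{\zeta_{d-1}}\|)$ is invertible along the whole curve. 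Substituting $\kappa_j = \zeta_j'\,\|T_{\zeta_j}\|$ into \eqref{eq:cur-sys} and dividing by the nonzero factors turns the algebraic condition into the ODE \eqref{eq:ode_system:1}, i.e. $\bm\zeta'(s) = D^{-1}(s)\,\bm G^{-1}(s)\,\bm\mu(s)$, together with the frame equation $\Gamma'(s)=T(\bm\zeta(s))$ that merely encodes that $T$ is the unit tangent.

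The second step is the purely algebraic identity relating $\bm G,\bm\mu$ to the transverse density. From the definitions \eqref{eq:trans-mom} and \eqref{eq:transverse-density} one reads off $\bm\mu(s) = \mu_{\bm 0}(s)\,v_\perp(s)$ and $\bm G(s) = \mu_{\bm 0}(s)\,(\mathrm{cov}_\perp(s)+v_\perp(s)v_\perp(s)^\tT)$, since dividing the second transverse moments by $\mu_{\bm 0}$ recovers exactly the second-moment matrix of $p_\perp(\cdot,s)$. Because $\bm G$ is invertible and $\mu_{\bm 0}(s)>0$ (the density being strictly positive on the interior of $\mathbb X$), the matrix $\mathrm{cov}_\perp+v_\perp v_\perp^\tT$ is invertible as well, and the scalar $\mu_{\bm 0}(s)$ cancels in $\bm G^{-1}\bm\mu = (\mathrm{cov}_\perp+v_\perp v_\perp^\tT)^{-1}v_\perp$. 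Inserting this into the ODE from the first step yields precisely \eqref{eq:ode_system:2}, and reversing each implication gives the converse, completing the equivalence.

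I expect the main obstacle to be bookkeeping rather than a conceptual difficulty: one must carefully track that the frame-dependent quantities $\bm G$, $\bm\mu$, $v_\perp$, $\mathrm{cov}_\perp$ and $D$ are all evaluated for the same $\bm\zeta(s)$ coming from the tangent equation $\Gamma'=T(\bm\zeta)$, so that the right-hand side of \eqref{eq:ode_system:2} is genuinely a function of $(\Gamma(s),\bm\zeta(s))$. The only place where the hypotheses do real work is in guaranteeing $\|T_{\zeta_j}(s)\|\ne 0$, which makes $D$ invertible and the spherical frame a true orthonormal basis; this is exactly why the assumption $\zeta_k(s)\notin\{0,\pi\}$ is imposed.
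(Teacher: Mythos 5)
Your proposal is correct and follows essentially the same route as the paper, which presents this theorem as a summary of the derivation immediately preceding it: invoke Theorem~\ref{theo:moments} for the equivalence with \eqref{eq:cur-sys}, substitute $\kappa_j=\zeta_j'\|T_{\zeta_j}\|$ from \eqref{eq:kappa_ds} using the invertibility of $D(s)$ guaranteed by $\zeta_k(s)\notin\{0,\pi\}$, and cancel the zeroth transverse moment to pass from $\bm G^{-1}\bm\mu$ to $(\mathrm{cov}_\perp+v_\perp v_\perp^\tT)^{-1}v_\perp$. Your explicit identification of the identities $\bm\mu=\mu_{\bm 0}v_\perp$ and $\bm G=\mu_{\bm 0}(\mathrm{cov}_\perp+v_\perp v_\perp^\tT)$ matches the paper's (more tersely stated) argument exactly.
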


From a numerical point of view the above instabilities causes by the
ambiguousness of the spherical coordinates appear to be
non-problematic.  Notice that the scenery, i.e. the random variable
$\bm X$ with density $p_{\bm X}$, the starting point $\Gamma(0)$, and
the initial tangent $\Gamma'(0)$ may be rotated such that the
spherical coordinates of $\Gamma'(0)$ satisfy
$\zeta_k \not \in \{0,\pi\}$, $k=1,\ldots,d-2$.  If the solution of
\eqref{eq:ode_system:2} is computed step-by-step by a linear multistep
method, we may stop the computation whenever the spherical coordinates
of the tangent become ambiguous.  Rotating the scenery with the
computed curve again, we can continue the computations.

\section{Principal Curves of Uniformly Distributed Random Variables} \label{sec:num}
%
In this section, we are interested in the concrete computation of
principal curves of uniformly distributed random variables with
densities supported at certain specific domains in $\mathbb R^3$.  For
the numerics and the considerations on symmetries, we have to assume
that $\Gamma$ fulfills the following \emph{admissibility assumptions}:
\begin{enumerate}[(i)]
\item $\mathbb X$ contains no ambiguity points with respect to
  $\Gamma$.  This implies that the normal map is the left inverse of
  the normal coordinate map
  \begin{equation} 	\label{eq:left-inverse}
    \nu_{\Gamma}\circ\mu_{\Gamma}=\id_{\mathbb X},
  \end{equation}
\item the map
  $\nu_{\Gamma}:\ [0, \ell_\Gamma]\times\mathbb R^{d-1} \to \mathbb
  R^d$ is a diffeomorphism onto its image.
\end{enumerate}
Excluding any ambiguity points, we are able to compute the transverse
moments $\mu_{\bm j}(s)$ in \eqref{eq:trans-mom} by only knowing the
current position $\Gamma(s)$ and the corresponding tangent
$\Gamma'(s) = T(\bm \zeta(s))$ since the domain of integration
$\mathcal R(s)$ becomes simply the cross-section $\mathbb X(s)$.

\subsection{Symmetric and Rotation-Invariant Domains}
We start with densities having a special symmetric support which will
result in principal curves lying in a plane.  Without loss of
generality, we call the density $p_{\bm X}$ with compact support
$\mathbb X$ \emph{reflectionally symmetric} if
$p_{\bm X}(x_1, \dots, x_{d-1}, x_d) = p_{\bm X}(x_1, \dots, x_{d-1},
- x_d)$.  The hyper-plane $\mathcal H_d$ orthogonal to $\bm e_d$ is
here the \emph{reflection plane}.  Then we have the following theorem.

\begin{thm}
  \label{lem:sym-dom}
  Let $p_{\bm X}$ be reflectionally symmetric. If the admissible
  principal curve $\Gamma$ starts in
  $\partial \mathbb X \cap \mathcal H_d$, then $\Gamma$ remains in
  $\mathcal H_d$.
\end{thm}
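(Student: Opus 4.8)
The plan is to exploit the reflection symmetry at the level of the self-consistency equation and then to invoke uniqueness for the underlying ordinary differential equation. Write $R\colon \mathbb R^d \to \mathbb R^d$ for the reflection $R(x_1,\dots,x_{d-1},x_d) = (x_1,\dots,x_{d-1},-x_d)$, so that $\mathcal H_d = \Fix(R)$ and, by hypothesis, $p_{\bm X}\circ R = p_{\bm X}$ and $R(\mathbb X) = \mathbb X$.

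First I would show that $R$ maps principal curves to principal curves. Since $R$ is an isometry with $R^2 = \id$ and $|\det R| = 1$, one has $\lambda_{R\Gamma}(\bx) = \lambda_\Gamma(R\bx)$ and $\pi_{R\Gamma}(\bx) = R\,\pi_\Gamma(R\bx)$. Substituting $\bx = R\by$ in the integral self-consistency condition \eqref{eq:integral2} written for $R\Gamma$, and using the $R$-invariance of $p_{\bm X}$, the domain $\lambda_{R\Gamma}^{-1}(I)$ becomes $\lambda_\Gamma^{-1}(I)$ and the integrand becomes $R(\by-\pi_\Gamma(\by))\,p_{\bm X}(\by)$; hence the condition for $R\Gamma$ is exactly $R$ applied to the condition for $\Gamma$. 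Therefore $R\Gamma$ is self-consistent whenever $\Gamma$ is, and it is again a smooth arc-length Jordan curve in $\mathbb X$ enjoying the same admissibility properties.

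Next I would reformulate self-consistency as a second-order autonomous ODE for the curve. By \eqref{eq:cur-sys} the principal curvatures, and hence the normal acceleration $\Gamma''(s) = \sum_{i=1}^{d-1}\kappa_i(s) N_i(s) = \sum_{i=1}^{d-1}(\bm G^{-1}\bm\mu)_i(s)\,N_i(s)$, are determined by the current position and tangent through the transverse moments; write this as $\Gamma'' = A(\Gamma,\Gamma')$. A short check using the transformation $\bm\mu \mapsto O^\tT\bm\mu$, $\bm G \mapsto O^\tT\bm G O$ under an orthogonal change $N_i\mapsto \sum_j O_{ji}N_j$ of the normal frame shows that $A$ is independent of that choice (it is the genuine second derivative of the curve), and $A$ is smooth in $(\Gamma,\Gamma')$ because $p_{\bm X}$ is smooth and $\bm G$ is invertible by Theorem~\ref{theo:moments}, so the system has unique solutions for given initial data $(\Gamma(0),\Gamma'(0))$. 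The computation of the previous paragraph then yields the $R$-equivariance $A(R\bm p, R\bm t) = R\,A(\bm p,\bm t)$, since reflecting position and tangent reflects the cross-section $\mathcal R(s)$ and the frame $N_i\mapsto R N_i$ while leaving $\bm G$ and $\bm\mu$ unchanged.

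Finally I would combine equivariance with uniqueness. The initial point satisfies $\Gamma(0)\in\mathcal H_d$, hence $R\Gamma(0)=\Gamma(0)$; provided the initial tangent also lies in $\mathcal H_d$, linearity of $R$ gives $(R\Gamma)'(0)=R\,\Gamma'(0)=\Gamma'(0)$, so $\Gamma$ and $R\Gamma$ solve the same equivariant second-order ODE with identical initial data, and uniqueness forces $R\Gamma = \Gamma$ on $[0,\ell_\Gamma]$, i.e.\ $\Gamma(s)\in\Fix(R)=\mathcal H_d$ throughout. The main obstacle is exactly the justification that $\Gamma'(0)\in\mathcal H_d$: this is where the reflection symmetry of the starting data on $\partial\mathbb X\cap\mathcal H_d$ has to be used to rule out an admissible initial tangent leaving the plane, and, together with verifying the frame-independence and $R$-equivariance of the acceleration field $A$, it is the delicate point of the argument.
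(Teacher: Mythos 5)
Your route---reflection equivariance of the self-consistency condition combined with uniqueness for the second-order system $\Gamma'' = A(\Gamma,\Gamma')$---is genuinely different from the paper's, which argues directly by contradiction: if $\Gamma$ enters $\mathcal H_d^+$ on some parameter interval $\bar I$, the preimage $\lambda_\Gamma^{-1}(\bar I)$ is squeezed in $\mathcal H_d^+$ and stretched in $\mathcal H_d^-$, so the conditional mean is pulled into $\mathcal H_d^-$ while the projection stays in $\mathcal H_d^+$, contradicting \eqref{eq:integral}. Where your argument applies it is cleaner and more rigorous in spirit; the equivariance computations ($R\Gamma$ self-consistent, $A(R\bm p,R\bm t)=R\,A(\bm p,\bm t)$, frame-independence of $A$) are sound, and under the admissibility assumption the moments indeed depend only on $(\Gamma(s),\Gamma'(s))$ because $\mathcal R(s)$ reduces to the cross-section $\mathbb X(s)$, so the system is a genuine local ODE.

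However, the step you yourself flag as delicate is a genuine gap, and it is not a technicality: the theorem assumes only $\Gamma(0)\in\partial\mathbb X\cap\mathcal H_d$ and says nothing about $\Gamma'(0)$. If $\Gamma'(0)\notin\mathcal H_d$, your construction produces merely a second principal curve $R\Gamma$ through the same point with the reflected tangent, which is no contradiction---distinct principal curves through one point are entirely possible (for the rotationally invariant ball, every diameter is principal). Excluding a transverse initial tangent requires exactly the kind of asymmetry-of-mass argument the paper's proof is built on, so your proof reduces the theorem to the one case it cannot reach rather than establishing it. A second, smaller concern: the curve starts on $\partial\mathbb X$, where the cross-sections degenerate and the right-hand side of the ODE need not be Lipschitz (compare the $0/0$ limit resolved by l'Hospital in \eqref{eq:PCdgl_triangle_extended}), so uniqueness of the solution emanating from the initial data at $s=0$ is itself not automatic and would need separate justification.
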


\begin{proof}
  Denote by $\mathcal H_d^+ := \{ \bm x : x_d>0\}$ and
  $\mathcal H_d^- := \{ \bm x : x_d<0\}$ the half-spaces with respect
  to $\mathcal H_d$.  Assume $\Gamma(s_1) \in \mathcal H_d$ and
  $\Gamma(t) \in \mathcal H_d^+$ for $t \in (s_1,s_2)$.  Since
  $\Gamma$ is smooth, we may choose $s_2$ such that the hyper-plane
  $\lambda_\Gamma^{-1}(s_2)$ is not reflectionally symmetric with
  respect to $\mathcal H_d$, whereas $\lambda_\Gamma^{-1}(s_1)$ is
  perpendicular to $\mathcal H_d$.  Figuratively, the section
  $\lambda_\Gamma^{-1}(\bar I)$ is squeezed in $\mathcal H_d^+$ and
  stretched in $\mathcal H_d^-$.  Mathematically,
  $\lambda_\Gamma^{-1}(\bar I) \cap \mathcal H_d^-$ has a greater mass
  than $\lambda_\Gamma^{-1}(\bar I) \cap \mathcal H_d^+$.
  Consequently, the conditional mean $\mathbb E[\cdot]$ of
  $\lambda_\Gamma^{-1}(\bar I)$ lies in $\mathcal H_d^-$, whereas the
  conditional mean $\mathbb E[\pi_\Gamma(\cdot)]$ lies in
  $\mathcal H_d^+$.  Thus, the integral \eqref{eq:integral} cannot
  hold true, which contradicts the self-consistency meaning that
  $\Gamma$ cannot leave the hyper-plane $\mathcal H_d$.  The basic
  idea of the proof is schematically shown in
  Figure~\ref{fig:sym}.
\end{proof}

\begin{figure}
  \centering
  \includegraphics[width=0.4\linewidth]{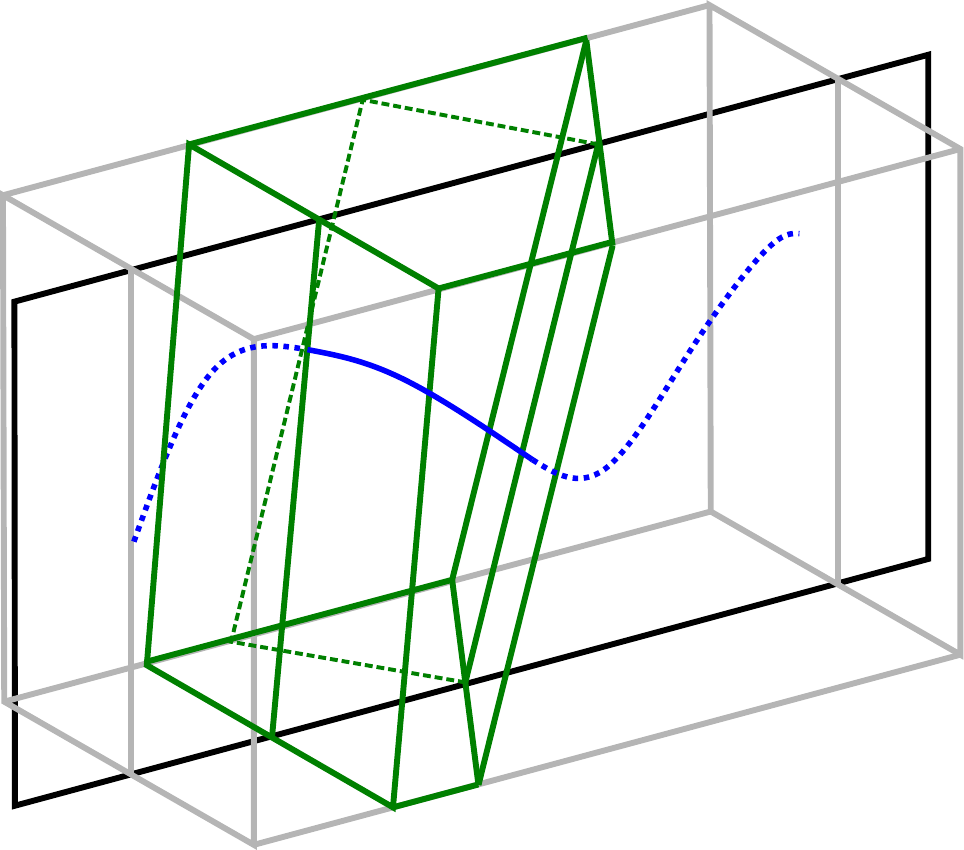}
  \caption{For simplicity, the region $\mathbb X$ (gray) is shown as
    cuboid.  If the curve $\Gamma$ (blue) leaves the plane $\mathcal
    H_3$, the section $\lambda_\Gamma^{-1}(\bar I)$ becomes
    non-symmetric.  The additional
    region in $\mathcal H_3^-$, which is here schematically shown by
    the green, dashed line, pulls the conditional expectation into
    $\mathcal H_3^-$, whereas $\Gamma(I)$ is contained in $\mathcal
    H_3^+$.}
  \label{fig:sym}
\end{figure}

Again without loss of generality, we call the density $p_{\bm X}$ with
compact support $\mathbb X$ \emph{rotationally symmetric} if
$p_{\bm X}(x_1, \dots, x_{d-1}, x_d) = p_{\bm X}(y_1, \dots, y_{d-1},
x_d)$ for $\|x_1,\dots, x_{d-1}\| = \|y_1,\dots, y_{d-1}\|$.

\begin{corollary}
  \label{cor:rot-sym}
  Let $p_{\bm X}$ be rotationally symmetric.  If the admissible
  principal curve $\Gamma$ starts in $\partial \mathbb X$, then
  $\Gamma$ is contained in a hyper-plane.
\end{corollary}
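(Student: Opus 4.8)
The plan is to reduce the rotationally-symmetric case to the reflectionally-symmetric Theorem~\ref{lem:sym-dom} by exploiting that rotational symmetry is an abundance of reflectional symmetries. First I would observe that if $p_{\bm X}$ is rotationally symmetric about the $\bm e_d$-axis, then for \emph{every} hyper-plane $\mathcal H$ containing that axis of rotation, the density is reflectionally symmetric with respect to $\mathcal H$: reflecting the first $d-1$ coordinates across any line through the origin in $\mathbb R^{d-1}$ preserves the Euclidean norm $\|x_1,\dots,x_{d-1}\|$ and fixes $x_d$, hence leaves $p_{\bm X}$ invariant. Thus the rotational symmetry furnishes a whole pencil of reflection planes through the axis, each of which is a candidate reflection plane $\mathcal H_d$ (after the rotation that aligns it with the coordinate hyper-plane orthogonal to $\bm e_d$) to which Theorem~\ref{lem:sym-dom} applies.

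The key steps, in order, would be as follows. Since $\Gamma$ starts in $\partial\mathbb X$, I would pick the unique hyper-plane $\mathcal H$ that contains the axis of rotation and the starting point $\Gamma(0)$; if $\Gamma(0)$ lies on the axis itself, any hyper-plane through the axis and the initial tangent $\Gamma'(0)$ will do. By the observation above, $p_{\bm X}$ is reflectionally symmetric with respect to this $\mathcal H$. Next I would verify that the starting point lies in $\partial\mathbb X \cap \mathcal H$, which holds by the very choice of $\mathcal H$. Applying Theorem~\ref{lem:sym-dom} with this $\mathcal H$ in the role of the reflection plane then forces the admissible principal curve $\Gamma$ to remain in $\mathcal H$, and since $\mathcal H$ is a hyper-plane, the conclusion follows immediately.

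The main obstacle I anticipate is the choice of the hyper-plane when the initial configuration is degenerate, namely when $\Gamma(0)$ lies on the rotation axis, so that infinitely many hyper-planes pass through the axis and the starting point. In that situation I would use the initial tangent: I would choose $\mathcal H$ to be the hyper-plane spanned by the axis direction $\bm e_d$ and the initial tangent $T(\bm\zeta(0)) = \Gamma'(0)$ (if these are parallel, any plane through the axis works and the curve is forced onto the axis). One must check that $\mathcal H$ so chosen is still a reflection plane for the rotationally symmetric density---which it is, being a plane through the axis---and that the hypotheses of Theorem~\ref{lem:sym-dom} (admissibility, the starting point lying in $\partial\mathbb X \cap \mathcal H$) are met after aligning $\mathcal H$ with the coordinate hyper-plane via a rotation that fixes the density. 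A secondary subtlety worth a sentence is that admissibility of $\Gamma$ is a hypothesis transported unchanged from the corollary's assumptions, so no new regularity argument is needed; the whole proof is a symmetry-selection argument feeding into the already-established Theorem~\ref{lem:sym-dom}.
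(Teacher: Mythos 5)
Your proposal is correct and follows essentially the same route as the paper: the paper's proof also reduces to Theorem~\ref{lem:sym-dom} by noting that a rotationally symmetric density is reflectionally symmetric with respect to a suitable hyper-plane (after a rotation placing $\Gamma(0)$ in $\partial\mathbb X\cap\mathcal H_d$). You merely spell out the selection of the reflection plane and the degenerate on-axis case in more detail than the paper does.
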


\begin{proof}
  After a suitable rotation, we may assume that $\Gamma$ starts in
  $\partial \mathbb X \cap \mathcal H_d$.  Since the rotationally
  symmetric density $p_{\bm X}$ is reflectionally symmetric too, the
  assertions follows immediately form Lemma~\ref{lem:sym-dom}.
\end{proof}

\begin{example}
  If the density $p_{\bm X}$ on the cylinder
  $\mathcal C := \{\bm x : x_1^2 + x_2^2 \le r, x_3 \in [a,b]\}$ is
  rotationally symmetric, then every admissible principal curve
  starting at the boundary $\partial \mathcal C$ degenerates to a
  planar curve.
\end{example}

We call the density $p_{\bm X}$ with compact support
$\mathbb X = \mathbb B_r$ \emph{rotationally invariant}, if
$p_{\bm X}(\bm x) = p_{\bm X}(\bm y)$ for $\|\bm x\| = \|\bm y\|$,
where $\mathbb B_r$ denotes the ball of radius $r>0$.

\begin{corollary}
  Let $p_{\bm X}$ be rotationally invariant.  If the admissible
  principal curve $\Gamma$ starts in $\partial \mathbb B_r$, then
  $\Gamma$ is the straight line segment through the origin.
\end{corollary}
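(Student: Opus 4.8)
The plan is to exploit the fact that a rotationally invariant density has vastly more symmetry than the single reflection hypothesis of Lemma~\ref{lem:sym-dom}: reflection across \emph{every} hyperplane through the origin leaves $p_{\bm X}$ invariant. Intersecting the resulting family of invariant hyperplanes should pin the curve down to one line, and a smooth arc-length curve confined to a line must be a segment of that line.

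First I would fix the boundary starting point $\bm p_0 := \Gamma(0)$, which satisfies $\|\bm p_0\| = r > 0$, and consider, for each unit vector $\bm n$ orthogonal to $\bm p_0$, the hyperplane $\mathcal H_{\bm n} := \{\bm x : \langle \bm n, \bm x \rangle = 0\}$. Reflection across $\mathcal H_{\bm n}$ is an orthogonal map, so by rotational invariance it fixes $p_{\bm X}$; moreover $\langle \bm n, \bm p_0 \rangle = 0$ gives $\bm p_0 \in \partial \mathbb B_r \cap \mathcal H_{\bm n}$. Rotating the whole scenery so that $\mathcal H_{\bm n}$ is carried to $\mathcal H_d$ leaves $p_{\bm X}$, the ball $\mathbb B_r$, the admissibility assumptions, and the principal-curve property unchanged, because the projection index $\lambda_\Gamma$, the projection $\pi_\Gamma$, and the conditional expectation are all equivariant under orthogonal maps. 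Lemma~\ref{lem:sym-dom} then applies to the rotated configuration and yields $\Gamma \subseteq \mathcal H_{\bm n}$ for every such $\bm n$.

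Next I would intersect these containments. Since the vectors $\bm n$ range over all of $\bm p_0^{\perp}$, we get $\Gamma \subseteq \bigcap_{\bm n \perp \bm p_0} \mathcal H_{\bm n} = (\bm p_0^{\perp})^{\perp} = \mathbb R\,\bm p_0$, a line through the origin. A smooth Jordan curve parameterized by arc length whose image lies on a line must have constant unit tangent $\Gamma'(s)$, hence be a straight segment on this diameter; equivalently its principal curvatures vanish. This is consistent with \eqref{eq:cur-sys}: rotational invariance about the axis $\mathbb R\,\bm p_0$ forces every cross-sectional centroid onto the axis, so each first transverse moment vanishes, $\bm \mu(s) = \bm 0$, and therefore $\kappa_j(s) = 0$.

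The main obstacle will be the careful bookkeeping rather than any single hard estimate: one must justify invoking Lemma~\ref{lem:sym-dom} simultaneously for the entire continuum of reflection planes $\mathcal H_{\bm n}$, and check that each rotated configuration genuinely still satisfies the admissibility assumptions (absence of ambiguity points and the diffeomorphism property of $\nu_\Gamma$), which holds because these properties are preserved by the orthogonal maps fixing $p_{\bm X}$ and $\mathbb B_r$. A secondary point to settle is that the segment runs \emph{through} the origin all the way to the antipodal boundary point: starting on $\partial \mathbb B_r$ with vanishing curvature, $\Gamma$ traces the diameter, and self-consistency at the terminal endpoint forces it to extend to the full diameter, exactly as in the planar computation of Duchamp \& St\"utzle.
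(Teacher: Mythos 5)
Your proposal is correct and follows essentially the same route as the paper: the paper invokes Corollary~\ref{cor:rot-sym} (which is itself just a rotation plus Theorem~\ref{lem:sym-dom}) for every hyperplane through $\Gamma(0)$ and the origin and intersects them to obtain the line $\mathbb R\,\Gamma(0)$, exactly as you do by applying the reflection lemma directly to each $\mathcal H_{\bm n}$ with $\bm n \perp \Gamma(0)$. Your additional remarks on equivariance under orthogonal maps and on the vanishing of the transverse moments are consistent elaborations of the same argument.
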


\begin{proof}
  Due to Corollary~\ref{cor:rot-sym}, $\Gamma$ is contained in a
  hyper-plane.  Since this holds true for every hyper-plane through
  $\Gamma(0)$ and the origin, the principal curve has to be a line
  segment.
\end{proof}

\subsection{Rectangular Triangles and Squares}

Next, we like to derive principal curves for uniform densities on
rectangular triangles in $\mathbb R^2$.  The two-dimensional special
case of our moving reference frame is just
\begin{equation*}
  T(\zeta) := 
  \begin{pmatrix}
    \cos \zeta \\
    \sin \zeta
  \end{pmatrix}
  \qquad\text{and}\qquad
  N(\zeta) :=
  \begin{pmatrix}
    -\sin \zeta \\
    \cos \zeta
  \end{pmatrix}
\end{equation*}
with $\zeta \in [0, 2 \pi)$.  We start by studying the system of
differential equations \eqref{eq:ode_system:2} for the infinite domain
$\mathbb X = \mathbb R^2_{\ge 0} := \{ (x_1,x_2) \in \mathbb R^2 :\,
x_1,x_2 \ge 0 \}$ equipped with the Lebesgue measure $\lambda$. Of
course $\lambda$ is not a probability measure on
$\mathbb R^2_{\ge 0}$.  However, the quantities $\mu_1$, $\mu_2$ in
\eqref{eq:trans-mom} are well defined up to the multiplicative factor
$1/V(\mathbb X)$, whenever $0 < \zeta < \pi/2$.  Here $V(\mathbb X)$
is the area of $\mathbb X$.  Since this factor cancels out in the
differential equations, we may think of $p_{\bm X} \equiv 1$.  For
particular initial conditions of the curve
$\Gamma:[0,\infty) \to \mathbb R^2_+$, we shall (numerically) find a
family of curves---admissable inside the interior of
$\mathbb R^2_+$---that oscillates slowly around the line
$\Gamma_0(s) = (s,s)$, $s \ge 0$.  Note, that $\Gamma_0$ is the most
obvious principal curve for the domain $\mathbb R^2_+$.

If $\Gamma(s) = (x_1(s), x_2(s))$ is admissible such that
$\zeta \in (0,\pi/2)$, then $u$ of the normal map
\eqref{eq:normal-map} lives in $\mathcal R(s) = [u_-(s), u_+(s)]$ with
\[
  u_{-}(s) := - x_2(s)/\cos(\zeta(s)), \qquad u_{+}(s) := x_1(s)/\sin(\zeta(s)).
\]
Based on the width
\[
  w(s)  := u_{+}(s) - u_{-}(s),
\]
we obtain
\[
  \begin{aligned}
    p_{\perp}(u,s) & = 
    \begin{cases}
      w(s)^{-1}, & u_{-}(s)  \le  u \le u_{+}(s),\\
      0, & \text{else},
    \end{cases}\\
    v_\perp(s) & = \tfrac12 (u_+(s) + u_-(s)),\\
    \var_\perp(s) & = \tfrac{1}{12} w(s)^2.
  \end{aligned}
\]
Note that the covariance matrix here reduces to the variance of the
transverse density.

Inserting the mean $v_\perp$ and the variance $\var_\perp$ into
\eqref{eq:ode_system:2}, we arrive at the following system
\begin{equation}
  \label{eq:PCdgl_triangle}
  \begin{aligned}
    \dot x_1(s) & = \cos(\zeta(s)),\\
    \dot x_2(s) & = \sin(\zeta(s)),\\
    \dot \zeta(s) & =  \frac32 \frac{ \frac{x_1(s)}{\sin(\zeta(s))} - \frac{x_2(s)}{\cos(\zeta(s))}}{\frac{x_1^2(s)}{\sin(\zeta(s))^2} - \frac{x_1(s) \, x_2(s)}{\sin(\zeta(s))\cos(\zeta(s))}+\frac{x_2^2(s)}{\cos(\zeta(s))^2}}.
  \end{aligned}
\end{equation}
In order to determine a principal curves, we like to start on the
$x_1$-axis, where the tangent vector is parallel to the $x_2$-axis,
i.e.,
\begin{equation}   \label{eq:initial_cond}
  x_1(0) > 0, \qquad x_2(0)=0, \qquad \zeta(0)  = \frac \pi 2.
\end{equation} 
Unfortunately, we cannot insert this initial conditions into
\eqref{eq:PCdgl_triangle}, since $x_2(s) / \cos(\zeta(s))$ is
undefined at these points.  However, by l'Hospital's rule, we can use
the continuous continuation as $s \to 0$ and incorporate the inital
conditions.  We use this observation and extend the system of
differential equations to
\begin{equation}
  \label{eq:PCdgl_triangle_extended}
  \begin{aligned}
    \dot x_1(s) & = \cos(\zeta(s)),\\
    \dot x_2(s) & = \sin(\zeta(s)),\\
    \dot \zeta(s) & =  \begin{cases} 
      \frac32 \frac{ \frac{x_1(s)}{\sin(\zeta(s))} - \frac{x_2(s)}{\cos(\zeta(s))}}{\frac{x_1^2(s)}{\sin(\zeta(s))^2} - \frac{x_1(s) \, x_2(s)}{\sin(\zeta(s))\cos(\zeta(s))}+\frac{x_2^2(s)}{\cos(\zeta(s))^2}}, & 0 < x_1,\; x_2,\; 0 < \zeta(s) < \frac \pi 2,\\
      \frac12 x_1^{-1}(s), & 0 < x_1,\; x_2 = 0,\; \zeta(s) = \frac \pi 2. 
    \end{cases}
  \end{aligned}
\end{equation}

Note that the system is homogeneous of degree $-1$ meaning that if
$\Gamma(s)$ is a solution, then for $t > 0$ the scaled version
$t^{-1} \Gamma(t s)$ is also a solution.  We solve this system
numerically using the method {\ttfamily odeint}
\cite{Hindmarsh1983,Petzold1983} in the Scipy-Python software, which
is based on the solver {\ttfamily lsoda} of the Fortran library
ODEPACK and is used in the remaining examples too.  The result is
shown in Figure~\ref{fig:solution_diff} left. Numerically we observe
the following: Let $\Gamma :[0, \ell] \to \mathbb R_{\ge 0}^2$ be a
solution of \eqref{eq:PCdgl_triangle_extended} with initial conditions
\eqref{eq:initial_cond}. Then for any $t>0$ the domain
\[
  \mathbb X_{t} := \{ \Gamma(s) + u N(\zeta(s)) : (s,u) \in (0,t)\times \mathbb R \} \cap \mathbb R^2_{\ge 0}  
\]
is a rectangular triangle and $\Gamma$ is a principal curve for the
uniform distribution on $\mathbb X_t$.  Moreover, the angle $\zeta(s)$
oscillates around $\frac \pi 4$, i.e, the function
$ \zeta(s) - \frac \pi 4$ has infinitely many zeros.  In particular,
this would imply that there are infinitely many closed principal
curves for the square, see Figure~\ref{fig:solution_diff} right, which
converge to the trivial non-smooth solution.

\begin{figure}
  \begin{center}
    \includegraphics[width=0.48\textwidth]{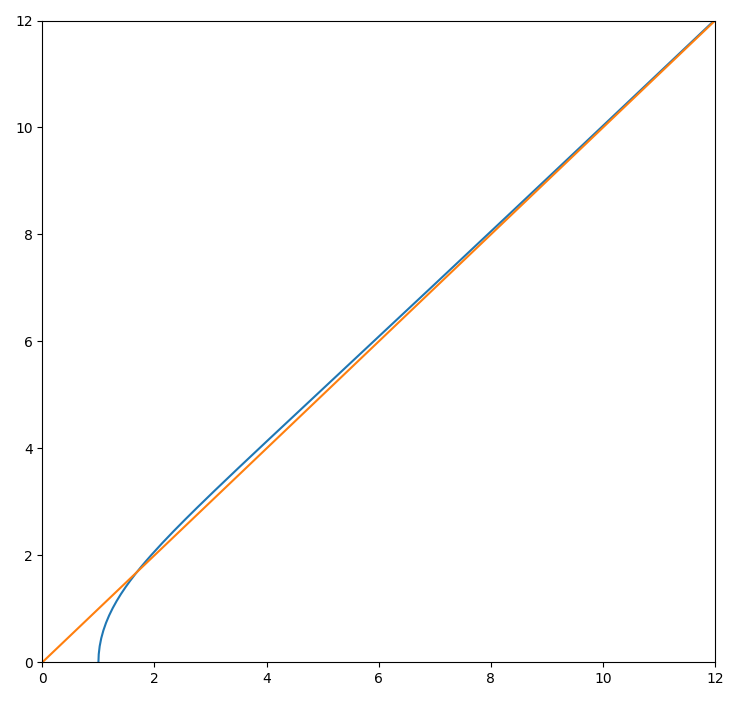}
    \includegraphics[width=0.48\textwidth]{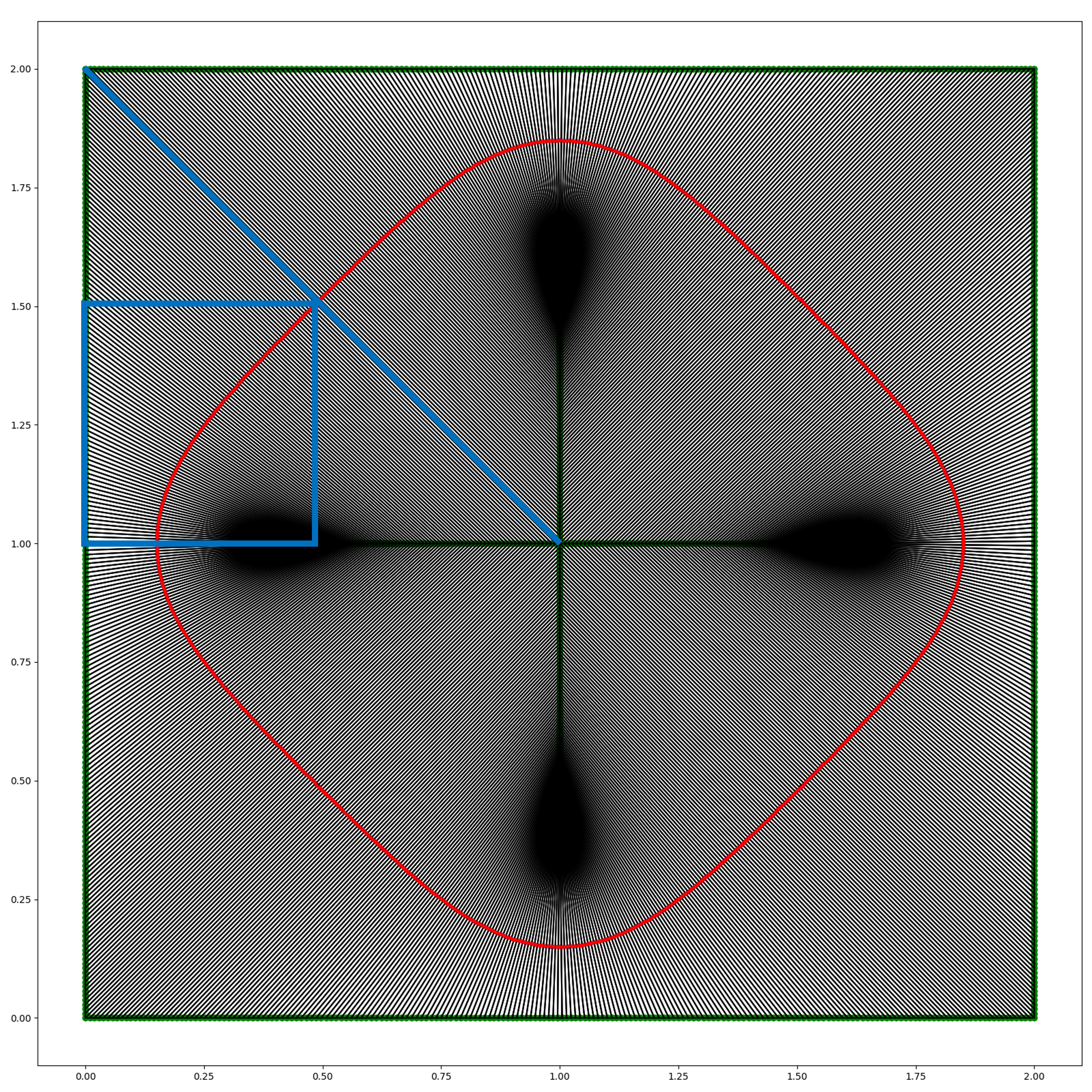}  
  \end{center}
  \caption{Left: A solution to \eqref{eq:PCdgl_triangle_extended} with
    $x_1(0)=1$, $x_2(0)=0$, $\zeta(0)=\frac \pi 2$ (blue) and for
    comparison the diagonal (orange).  Right: The derived non-trivial
    closed principal curve for the square (red) by composition of 8
    identical principal curves on rectangular triangles. We show the
    normals (black) and the border of the projection domain.}
  \label{fig:solution_diff}
\end{figure}

\subsection{Triangular-Based Prism}

Let $\Delta(E_1, E_2, E_3)$ be some triangle in the $x_1x_2$-plane,
and let
\begin{equation*}
  \mathcal P_\Delta :=
  \{ \bm x \in \mathbb R^3 : (x_1, x_2, 0) \in \Delta(E_1, E_2, E_3), x_3 \in [0, h]\}
\end{equation*}
be the corresponding prism of height $h$.  We want to compute a
principal curve starting at some point
$\Gamma(0) \in \Delta(E_1,E_2,E_3)$ with tangent
$\bm \zeta(0) = (\pi/2, \pi/2)$.  If $\Gamma$ is admissible, then the
cross-section between the prism and the normal planes at
$s \in (0,\ell_\Gamma)$ are not allowed to intersect with the two
bases of the prism.  Therefore, the cross-sections are again
triangles.  To compute the vertices of this triangles within the
normal coordinates, we may solve the equation systems
\begin{equation*}
  u_1 N_1(\bm \zeta(s)) + u_2 N_2(\bm \zeta(s)) - v \bm e_3 = E_i - \Gamma(s).
\end{equation*}
Notice that the system matrix $[N_1, N_2, -e_3]$ is triangular,
simplifying the computation of $u_1$ and $u_2$.  On the basis of these
vertices, we may split the integration over $u_1$ within the
definition of the transverse moments into integrals of the form
\begin{equation*}
  \tilde \mu_{(j,k)} (s) := \int_{v_1}^{v_2} \int_{a+bu_1}^{c+du_1} u_1^j u_2^k \, du_2 \, du_1.
\end{equation*}
The required first and seconds transverse moments are thus summations
about the partial moments
\begin{align*}
  \tilde \mu_{(1,0)}
  &= \frac{(d-b)}{3} \, (v_2^3 - v_1^3) + \frac{(c-a)}{2} \, (v_2^2 - v_1^2),
  \\
  \tilde \mu_{(2,0)}
  &= \frac{(d-b)}{4} \, (v_2^4 - v_1^4) + \frac{(c-a)}{3} \, (v_2^3 - v_1^3),
  \\
  \tilde \mu_{(1,1)}
  &= \frac{(d^2 - b^2)}{8} \, (v_2^4 - v_1^4) + \frac{(cd - ab)}{3} \, (v_2^3 - v_1^3) + \frac{(c^2 - a^2)}{4} 
    \, (v_2^2 - v_1^2),
  \\
  \tilde \mu_{(0,1)}
  &= \frac{(d^2 - b^2)}{6} \, (v_2^3 - v_1^3) + \frac{(cd-ab)}{2} \, (v_2^2 - v_1^2) + \frac{(c^2 - a^2)}{2} \, (v_2 - v_1),
  \\
  \tilde \mu_{(0,2)}
  &= \frac{(d^3 - b^3)}{12} \, (v_2^4 - v_1^4) + \frac{(cd^2 - ab^2)}{3} \, (v_2^3 - v_1^3) 
  \\
  &\qquad + \frac{(c^2d - a^2b)}{2} \, (v_2^2 - v_1^2) + \frac{(c^3 - a^3)}{3} \, (v_2 - v_1).
\end{align*}
Based on the moments, a principal curve of the triangular prism may be
computed by solving the differential equation system.  The results for
a specific triangle are shown in Figure~\ref{fig:tripri}.  The normal
planes do here not intersect so that the solution curve is admissible.
The computed curve coincides numerically with the curve through the
means of the sections $\pi_\Gamma^{-1}([t_k, t_{k+1}])$, where $t_k$
corresponds to the time steps of the solution curve; so the solutions
curve is self-consistent and hence a principal curve.  Using the above
procedure, we are able to compute principle curves of prism with
arbitrary polygonal base.

\begin{figure}
  \centering
  \includegraphics[width=0.9 \linewidth]{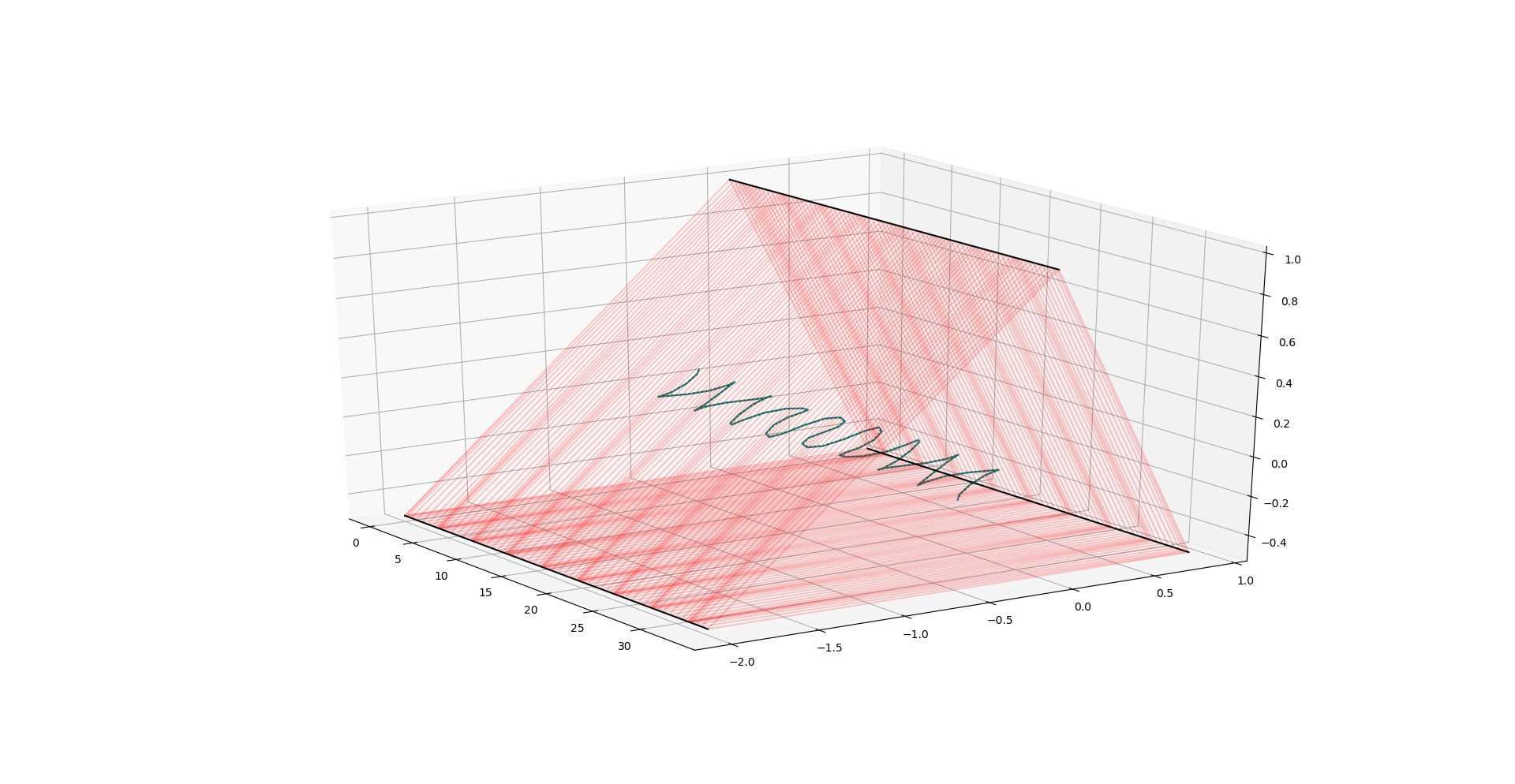}
  \caption{Principal curve in the triangular prism whose base
    corresponds to the points
    $(0,1,0)$, $(\sqrt3/2,-1/2,0)$, $(-2,-1/2,0)$.  The length of the
    prism is chosen such that both bases are parallel.  The
    intersection of the normal planes $\mathbb X(s)$ with the surface
    of the prism are shown as red triangles.  Since they do not
    intersect each other, the curve is admissible. The barycenters of
    the related Voronoi cells (green dots) numerically coincide with the
    curve.  Note the scaling with respect to the $x_3$-axis.}
  \label{fig:tripri}
\end{figure}

\subsection{Infinite Cylinder}

The arclength parameterization of a helix $\Gamma \equiv H$ is given
by
\[
  H(s) := 
  \begin{pmatrix}
    a \cos(ks) \\ a \sin(ks) \\ bks
  \end{pmatrix}, \qquad k:= 1/\sqrt{a^2+b^2}, \qquad a, b > 0.
\]
The corresponding Frenet frame reads as
\[
  T(s) =
  \begin{pmatrix}
    - ak \sin(ks) \\ ak \cos(ks) \\ bk    
  \end{pmatrix}, \qquad
  N(s) = 
  \begin{pmatrix}
    - \cos(ks) \\ - \sin(ks) \\ 0
  \end{pmatrix}, \qquad 
  B(s) =
  \begin{pmatrix}
    bk \sin(ks) \\ - bk \cos(ks) \\ ak
  \end{pmatrix}
\]
Moreover, it has constant curvature $\kappa$ and torsion $\tau$ given
by
\[
  \kappa = \frac{a}{a^2+b^2} = a k^2, \qquad \tau = \frac{b}{a^2+b^2} = b k^2.
\]

In what follows, we let 
\[
  \mathcal C_r := \{ (x_1, x_2, x_3)^\tT \;:\; x_1^2 + x_2^2 \le r^2 \}
\]
be the infinitely long cylinder of radius $r$.  To find appropriate
parameters, we consider \eqref{eq:int-cond}.  More precisely, we will
compute the integral
\[
  \begin{aligned}
    \bar H(s) & := A_s^{-1}\int_{\pi_{H}^{-1}(s)} (H(s) + u_1 N(s) + u_2 B(s)) (1-u_1\kappa) du_1 du_2,\\
    A_s & := \int_{\pi_{H}^{-1}(s)}(1 -u_1\kappa) du_1 du_2,
  \end{aligned}
\]
whose value has to coincide with $H(s)$ if the helix is a
principal curve, i.e.\ if \eqref{eq:int-cond} holds true.  Without
loss of generality we may assume $s=0$ so that
\[
  \bar H(0) = H(0) + \bar u_1 N(0) + \bar u_2 B(0) 
  = 
  \begin{pmatrix}
    a \\ 0 \\ 0   
  \end{pmatrix}
  + \bar u_1 
  \begin{pmatrix}
    -1 \\ 0 \\0 
  \end{pmatrix}
  + \bar u_2
  \begin{pmatrix}
    0 \\ -bk \\ ak
  \end{pmatrix}
\]
where 
\[
  \begin{aligned}
    \bar u_1 & := A_0^{-1}  \int_{\mathcal E_r} u_1 (1- \kappa u_1) du_1 du_2, \qquad
    \bar u_2 := A_0^{-1} \int_{\mathcal E_r} u_2 (1-\kappa u_1) du_1 du_2,\\
    \mathcal E_r & := \{ (u_1, u_2) : (a-u_1)^2 + (bku_2)^2 \le r^2 \}, \qquad
    A_0 :=  \int_{\mathcal E_r} (1-\kappa u_1) du_1 du_2.
  \end{aligned}
\]
Straightforward calculation leads to
\[
  \bar u_1= a\left(1 - \frac{r^2}{4b^2} \right) , \qquad \bar u_2=0
\]
such that
\[
  \bar H(0) = 
  \big(
    a r^2 / (4b^2) ,0 , 0
  \big)^{\mathrm T}.
\]
Setting $\bar H(0)$ equal to $H(0)$ and imposing the non-negativity of
the Jacobian determinant, i.e. $\kappa u_1 \le 1$,
$(u_1, u_2) \in \mathcal E_r$, we infer that the helix is a principal
curve for the uniform measure of the cylinder $\mathcal C_r$ if
\[
  b = r/2, \qquad 0 \le a \le r/4.
\]
Numerical experiments indicate that for $r/4 < a < 2r/3 $ there exists
$b < r/2$ such that the helix is also a principal curve, see
Figure~\ref{fig:helix}. Note the limiting case $b=0$ and $a=2/3$,
where the helix degenerates to a circle. However, in these cases the
helix has points of ambiguity inside the cylinder.

\begin{figure}
  \begin{center}
    \includegraphics[height=150pt]{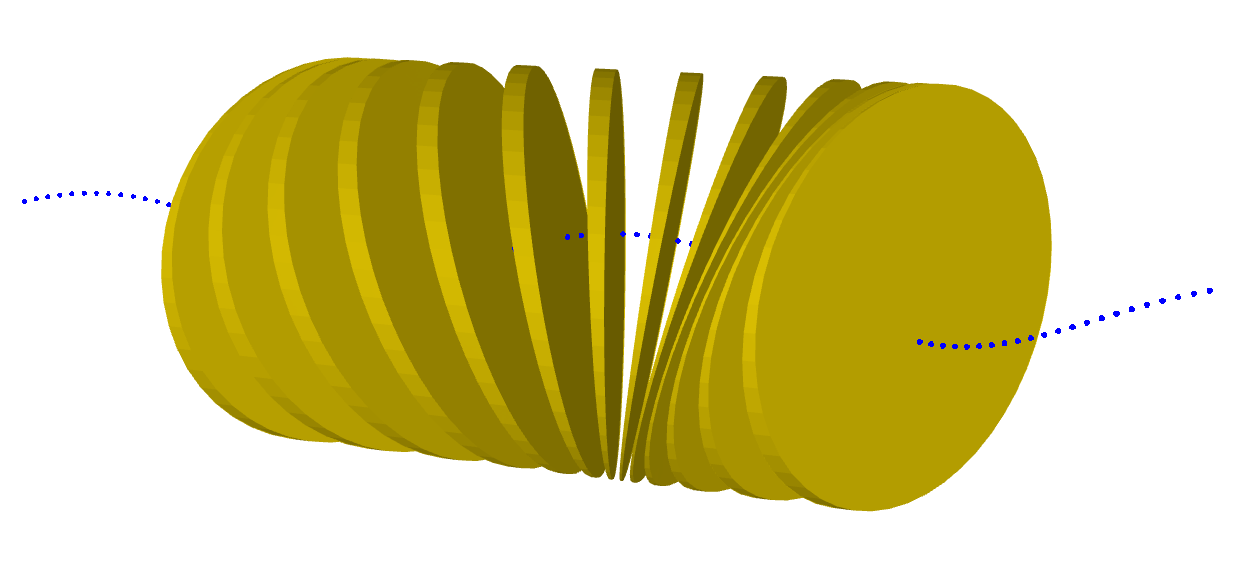}
    \includegraphics[height=150pt]{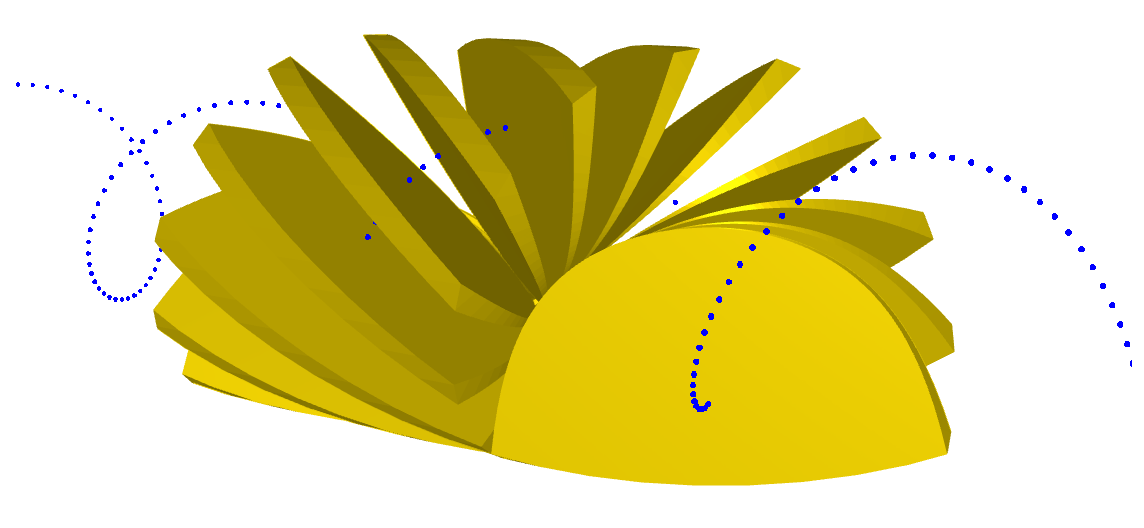}
    \includegraphics[height=150pt]{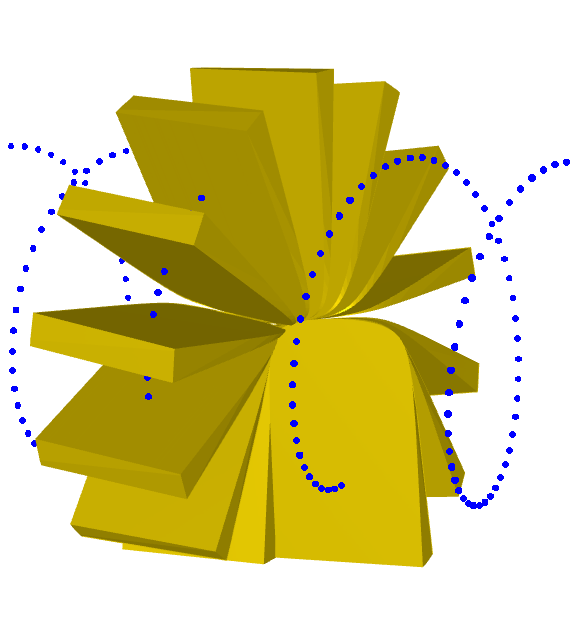}
  \end{center}
  \caption{Helices for several parameters
    $(a,b) \in \{(0.2, 0.5), (0.6, 0.35), (0.66, 0.1)\}$ (from top to
    bottom). The curves are sampled equidistantly (blue dots). For
    some sampling points the Voronoi regions intersected with the
    cylinder of radius $r=1$ are depicted in golden color.  Note that
    for the second and third set of parameters the helices are not
    admissible curves for the infinite cylinder.}
  \label{fig:helix}
\end{figure}

\section{Conclusion} \label{sec:conc}
We have derived a dynamical system for finding principal curves of
random variables in $\mathbb R^d$ for $d \ge 2$ and have numerically
computed the solution for uniformly distributed random variables with
density functions supported on certain domains.  It will be of
interest to consider also other distributions as, e.g. Gaussian
mixtures.  Another issue would be to have a look at principle curves
on manifolds as started in the papers \cite{Hauberg2016,KLO2020}.

Further, so far only the squared Euclidean norm was incorporated into
the considerations.  Unfortunately, classical PCA based on this
,,distance'' is sensitive to outliers so that robust methods were
considered in the literature, e.g.\ by skipping the square in the
Euclidean norm or taking the $\L_1$ norm.  For an overview of robust
subspace recovery, we refer to \cite{LM2018} and the references therein
and to recent results on robust principal lines \cite{NNSS19b}.  So
far we are not aware of a robust principal curve approach.
\\[2ex]

\textbf{Acknowledgement:} Funding by the DFG under Germany's
Excellence Strategy – The Berlin Mathematics Research Center MATH+
(EXC-2046/1, Projektnummer: 390685689) is acknowledged.

\bibliographystyle{abbrv}
\bibliography{literature}

\end{document}